\newcommand{\Z}{\mathbb{Z}}
\newcommand{\F}{\mathbb{F}}
\newcommand{\Q}{\mathbb{Q}}
\providecommand{\set}[1]{ \left\{ #1 \right\} }
\theoremstyle{plain}
\newtheorem{theorem}{Theorem}[section]
\newtheorem{proposition}[theorem]{Proposition}
\newtheorem{corollary}[theorem]{Corollary}
\theoremstyle{definition}
\newtheorem{definition}[theorem]{Definition}
\newtheorem{example}[theorem]{Example}
\newtheorem{examples}[theorem]{Examples}
\theoremstyle{remark}
\newtheorem{remark}[theorem]{Remark}
\numberwithin{equation}{theorem}
\numberwithin{table}{section}
\begin{document}

\author{Jos\'e Alejandro {Lara Rodr\'iguez}}
\title{Relations between  multizeta values in characteristic $p$}
\address{
Facultad de Matem\'aticas de la Universidad Aut\'onoma de Yucat\'an
Anillo Perif\'erico Norte, Tablaje Cat. 13615, Colonia Chuburn\'a Hidalgo Inn,
M\'erida Yucat\'an, M\'exico
}
\address{Departamento de Control Autom\'atico del
Centro de Investigaci\'on y de Estudios Avanzados del IPN,
Av. Instituto Polit\'ecnico Nacional 2508, San Pedro Zacatenco, 07360,
M\'exico, D.F.}

\email{lrodri@uady.mx, jlara@ctrl.cinvestav.mx}
\date{April 19, 2011}

\begin{abstract}
We study relations between the multizeta values for function fields 
introduced by D. Thakur. 
The product $\zeta(a)\zeta(b)$  is a linear combination of multizeta values. 
For $q=2$, a full conjectural description of how
the product of two zeta values can be described as the sum of multizetas was
given by Thakur.
The recursion part of this recipe was generalized by the author.
In this paper, the main conjecture formulated by the author, as well 
as some conjectures of Thakur  are proved.
Moreover, for general $q$, we prove  closed formulas as well as a recursive 
recipe to express $\zeta(a)\zeta(b)$ as a sum of multizeta values. 
\end{abstract}

\maketitle

\section{Introduction}
 We refer to \cite{Waldschmidt}, and references in there, for a survey of many
exciting recent developments related to the  multizeta values introduced by
Euler. We refer to \cite{Goss96, Thakur} for discussion of Goss zeta 
functions  in connection with the function field arithmetic.

Dinesh Thakur \cite[Section 5.10]{Thakur} introduced two types of multizeta
values for function fields over finite fields of characteristic $p$, one
complex valued (generalizing the Artin-Weil zeta function) and the other with
values in Laurent series over finite fields (generalizing the Carlitz-Goss zeta
function). In this paper, we only focus on the latter. For its properties,
connections with Drinfeld modules and Anderson $t$-motives, we refer the reader
to \cite{A-T2, Thakur,Thakur_Multizeta08, Thakur_Shuffle09}.

Thakur  proves the existence of ``shuffle'' relations for the
multizeta values (for a general $A$ with a rational place at infinity)
\cite{Thakur_Shuffle09}. In particular, he shows that the product
of multizeta values can also be expressed as a sum of some multizeta values, so
that the $\F_p$-span of all multizeta values is an algebra. 
In the function field case, the identities are much more complicated than the
classical shuffle identities. In fact there are two types of identities, one
with $\F_p(t)$ coefficients and the other with $\F_p$ coefficients. 
(Note that although for many purposes a good analog of $\Q$ is $\F_q(t)$, 
the prime field in  characteristic $p$ ($0$ respectively) is $\F_p$ ($\Q$ 
respectively)). We concentrate only on the latter type.  

The results in \cite{Thakur_Shuffle09}, although effective, are not explicit
and bypass the explicit conjectures formulated in \cite{Thakur_Multizeta08,
Jalr, Jalr10}. 
In this paper, we use the ideas of the process in \cite{Thakur_Shuffle09} to
prove the main conjecture formulated in 
\cite{Jalr, Jalr10} and to give a closed formula for the expression 
of the product of zeta values as the sum of multizeta values
(Theorems~\ref{closed_formula_for_general_q} and
\ref{symmetric_closed_formula_for_general_q}). More
precisely, for general $q$, we prove that the expression of $\zeta(a)\zeta(b)$
as a sum of multizeta values is obtained by a recursive process; we effectively
determine the recursion length, the terms and the number of terms to be added at
each step of the recursion (Theorem~\ref{thm2412}). Also, when $q=2$, we give 
formulas to compute the initial values (Theorem~\ref{initial_values_q2} and
Corollary~\ref{symmetric_closed_formula_q2}) and,
using one of this formulas, some conjectures  in \cite{Thakur_Multizeta08} are
proved.

\section{Frequently used notation}

\begin{tabularx}{\linewidth}{l X}
$\Z$  & \{integers\}\\
$\Z_+$ &  \{positive integers\}\\
$q$ &  a power of a prime $p$, $q = p^s$\\
$\F_q$ & a finite field of $q$ elements\\
${t}$&  an independent variable\\
$A$ & the polynomial ring $\F_q[t]$\\
$A_+$ &   monics in  $A$\\
$K$ &   the function field $\F_q(t)$\\
$K_\infty$ & $= \F_q((1/t)) = $ the completion of $K$ at $\infty$\\
$A_d$ & $ \{\mbox{elements of }A \mbox{ of degree }d\}$\\
$A_{d^+}$ & $ A_d \cap A_+$ \\
$[n]$ &   $= t^{q^n}-t$\\
`even' & $  \mbox{multiple of } q-1$\\
$\deg$ & $ \mbox{function assigning to } a\in A \mbox{ its degree in } t$\\
$\ell(k)$ & sum of the digits of the base $q$ expansion of $k$\\
$\lfloor x \rfloor$ & the largest integer not greater than $x$\\
$\lceil x \rceil$ &  the smallest integer not less than $x$.
\end{tabularx}

\section{Thakur multizeta values}

For $s\in\Z_+$, the \emph{Carlitz zeta values} \cite{Goss96, Thakur} are
defined as
\begin{align*}
\zeta_A(s):=\sum_{a\in A_+} \frac{1}{a^s} \in K_\infty.
\end{align*}

 For $s\in \Z$ and $d\geq 0$, write
\begin{align*}
S_d(s) := \sum _{ a\in A_{d^+}} \frac{1}{a^{s}} \in K.
\end{align*}

Given integers $s_i\in \Z_+$ and $d\ge 0$ put
\begin{align*}
  S_d(s_1,\dotsc,s_r) = S_d(s_1) \sum _{d > d_2 >\dotsb > d_r \ge 0}
S_{d_2}(s_2) \dotsm S_{d_r}(s_r) \in K.
\end{align*}

For $s_i\in \Z_+$, Thakur defined the \emph{multizeta values}  \cite{Thakur,
Thakur_Multizeta08} by:
\begin{align*}
\zeta(s_1,\dotsc,s_r):= \sum _{d_1>\dotsb >d_r\ge 0} S_{d_1}(s_1) \dotsm
S_{d_r}(s_r)= \sum \frac{1}{a_1^{s_1}\dotsm a_r^{s_r} }\in K_\infty,
\end{align*}
where the second sum is over all $a_i\in A_+$ of degree $d_i$ such that
$d_1>\dotsb >d_r\ge 0$. We say that this multizeta value has depth $r$ and
weight  $\sum s_i$.

\section{Relations between multizeta values}

Recall that Euler's multizeta values  $\zeta$ (only in this paragraph, the
greek letter $\zeta$ will be
used to denote the
classical multizeta values) are defined by $\zeta(s_1, \dotsc, s_r) = \sum
\left(n_1^{s_1} \dotsm n_r^{s_r} \right)^{-1}$, where the sum is over positive
integers $n_1> n_2 >\dotsb >n_r$ and $s_i$ are positive integers, with $s_1>1$
(this condition is required for convergence). Since $n_1=n_2$, $n_1>n_2$ or $n_2>n_1$, 
we  have the ``sum shuffle
relation''
\begin{align*}
\zeta(a)\zeta(b)&= \sum _{n_1=1 } ^\infty \frac{1}{n_1^a} \sum _{n_2=1 } ^\infty
\frac{1}{n_2^b}
= \sum _{n_1=n_2}\frac{1}{n_1 ^{a+b}} + \sum _{n_1>n_2}\frac{1}{n_1^a n_2^b} +
\sum _{n_2>n_1}\frac{1}{n_2^b n_1^a }\\
&= \zeta(a+b)+\zeta(a,b)+\zeta(b,a).
\end{align*}

In the function field case, this sum shuffle relation fails because there are
many polynomials of a given degree. In contrast to the classical sum shuffle,
in the function field case the identities we get are much more involved.

For $s_1,s_2\in \Z_+$ put
\begin{align*}
S_d(s_1,s_2)=\sum _{\substack {d=d_1>d_2\\a_i\in
A_+}}\frac{1}{a_1^{s_1}a_2^{s_2}}
\end{align*}
where $d_i=\deg(a_i)$. For $a,b\in \Z_+$, we define
\begin{align*}
\Delta_d (a,b)=S_d(a)S_d(b)-S_d(a+b).
\end{align*}
We write $\Delta(a,b)$ for $\Delta_1(a,b)$. The definition implies
$\Delta_d(a,b)=\Delta_d(b,a)$.

The next two theorems (the second theorem in the reference has 
implications to higher genus function fields, but we state only 
a special case relevant to us) are  due to Thakur \cite[Theorems 1,2]{Thakur_Shuffle09}. 

\begin{theorem}\label{shuffle_thm1}
   Given $a,b \in \Z_+$, there are $f_i\in \F_p$ and $a_i\in
\Z_+$, so that
\begin{align}
\Delta_d(a,b) =  \sum f_i S_d(a_i,a+b-a_i) \label{Delta1}
\end{align}
holds for $d=1$.
\end{theorem}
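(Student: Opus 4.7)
The plan is to compute $\Delta(a,b)=\Delta_1(a,b)$ directly by expanding the defining sums and performing a partial fraction decomposition over $K$. Since $A_{1^+}=\{t+\theta:\theta\in\F_q\}$, one has
\begin{align*}
S_1(a)S_1(b)=\sum_{\theta_1,\theta_2\in\F_q}\frac{1}{(t+\theta_1)^{a}(t+\theta_2)^{b}},
\end{align*}
and the diagonal contribution $\theta_1=\theta_2$ cancels the $S_1(a+b)$ subtracted in the definition of $\Delta$, leaving
\begin{align*}
\Delta(a,b)=\sum_{\theta_1\neq\theta_2}\frac{1}{(t+\theta_1)^{a}(t+\theta_2)^{b}}.
\end{align*}

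Next I would apply the standard partial fraction decomposition to each summand, obtained by Taylor-expanding $(y+w)^{-b}$ around $y=0$ and keeping the principal part:
\begin{align*}
\frac{1}{(t+\theta_1)^{a}(t+\theta_2)^{b}}
= \sum_{i=1}^{a}\frac{(-1)^{a-i}\binom{a+b-i-1}{b-1}}{(\theta_2-\theta_1)^{a+b-i}}\cdot\frac{1}{(t+\theta_1)^{i}}
+\sum_{j=1}^{b}\frac{(-1)^{b-j}\binom{a+b-j-1}{a-1}}{(\theta_1-\theta_2)^{a+b-j}}\cdot\frac{1}{(t+\theta_2)^{j}}.
\end{align*}
Exchanging the order of summation, the inner sum over $\theta_2\neq\theta_1$ (for fixed $\theta_1$), after the substitution $\alpha=\theta_2-\theta_1$, collapses to the classical character sum $\sum_{0\neq\alpha\in\F_q}\alpha^{-(a+b-i)}$, which equals $-1$ when $a+b-i$ is ``even'' (a multiple of $q-1$) and vanishes otherwise; either way it lies in $\F_p$.

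The last step is to translate the resulting $S_1(i)$'s into the required form. Because $A_{0^+}=\{1\}$, at depth two with $d=1$ the inner index is forced to $d_2=0$, so $S_1(s_1,s_2)=S_1(s_1)$ for every $s_1,s_2\in\Z_+$. Therefore each surviving $S_1(i)$ can be rewritten as $S_1(i,\,a+b-i)$ with $i\in\{1,\dotsc,a+b-1\}$, and the integer coefficients combine into elements $f_i\in\F_p$, as required.

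The main obstacle I anticipate is the bookkeeping: tracking signs, binomial coefficients reduced modulo $p$, and the cancellations between the two halves of the partial fraction decomposition, so that the final answer is manifestly in the claimed form. The conceptual reason the identity descends to $\F_p$ rather than living only over $\F_q$ is the elementary but essential fact that the character sum $\sum_{0\neq\alpha\in\F_q}\alpha^{k}$ always takes values in the prime subfield.
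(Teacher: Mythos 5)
Your proof is correct, and it is worth pointing out that the paper itself contains no proof of this statement: Theorem~\ref{shuffle_thm1} is quoted from Thakur \cite{Thakur_Shuffle09}, so your argument is a genuinely self-contained alternative. It also differs from the technique the paper deploys for the closely related explicit computations in the section on closed formulas: there the author treats $\Delta(a,b)$ as a single rational function whose denominator is a power of $[1]=t^q-t$, takes its partial fraction decomposition $\sum_{\mu\in\F_q} h_\mu(t)/(t-\mu)^n$, and uses invariance under $t\mapsto t+\theta$ together with uniqueness to get $h_\theta(t)=h_0(t-\theta)$ and hence an expansion $\sum_i f_i S_1(n-i)$ --- but that argument by itself only places the coefficients in $\F_q$, and the paper must invoke Theorem~\ref{shuffle_thm1} (i.e., Thakur's result) to conclude they lie in $\F_p$. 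Your pairwise decomposition does better on exactly this point: the coefficients you produce are integer binomial coefficients multiplied by the character sums $\sum_{\alpha\in\F_q^*}\alpha^{-(a+b-i)}\in\{0,-1\}$, so $\F_p$-rationality is automatic; moreover the nonvanishing condition $(q-1)\mid(a+b-a_i)$ recovers for free the parity restriction proved later in the paper (Corollary~\ref{parity_restriction}). Two small points to tighten. First, in characteristic $p$ you cannot literally obtain the partial fraction coefficients by Taylor expansion, since that derivation divides by factorials; instead, note that the displayed decomposition is an identity of rational functions whose coefficients are integers (binomial coefficients), valid over $\Z[u,v,(u-v)^{-1}]$, and hence reduces modulo $p$. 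Second, you should record explicitly that every surviving index satisfies $1\le i\le \max(a,b)\le a+b-1$, so that $a+b-i\in\Z_+$ and the rewriting $S_1(i)=S_1(i,a+b-i)$ via $S_0(s)=1$ is legitimate --- which is exactly the observation you make at the end.
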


\begin{theorem}\label{shuffle_thm2}
 Fix $A$.  If \eqref{Delta1} holds for some $f_i\in  \F_p^\times$ and distinct $a_i \in \Z_+$
for $d=1$, then \eqref{Delta1} holds for all $d\ge 0$. In this case, we have the shuffle
relation
$$ \zeta(a)\zeta(b) -\zeta(a+b) -\zeta(a,b) - \zeta(b,a)= \sum f_i
\zeta(a_i,a+b-a_i).$$
\end{theorem}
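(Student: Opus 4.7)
The plan is to split the statement into two halves: first, propagate the identity \eqref{Delta1} from $d=1$ to every $d \ge 0$; second, deduce the stated shuffle relation for $\zeta(a)\zeta(b)$ from that extended identity.

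The second half is a direct regrouping. Expanding
\begin{align*}
\zeta(a)\zeta(b) = \sum_{d_1,d_2 \ge 0} S_{d_1}(a)\, S_{d_2}(b)
\end{align*}
and partitioning according to whether $d_1 > d_2$, $d_1 = d_2$, or $d_1 < d_2$, the strict-inequality parts yield $\zeta(a,b) + \zeta(b,a)$, while the diagonal contributes $\sum_d \bigl(S_d(a+b) + \Delta_d(a,b)\bigr) = \zeta(a+b) + \sum_d \Delta_d(a,b)$, using $S_d(a) S_d(b) = S_d(a+b) + \Delta_d(a,b)$. Assuming \eqref{Delta1} at every $d$, interchanging summation converts the last sum into $\sum_i f_i \zeta(a_i, a+b-a_i)$, producing the asserted shuffle formula.

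The first half is the technical heart. My approach is to exhibit both sides of \eqref{Delta1} as images of their $d=1$ counterparts under a common ``inflation'' operator $\Psi_d$ that sends level-$d$ data to level-$(d+1)$ data, so that any $\F_p$-linear identity at $d=1$ propagates automatically. Concretely, I would exploit a uniform parameterization of $A_{(d+1)^+}$ in terms of $A_{d^+}$ --- for instance, by writing each $m \in A_{(d+1)^+}$ as $m = t\, m' + r$ with $m' \in A_{d^+}$ and $r$ of controlled degree, or via the Vandermonde-like factorization of $[d+1] = t^{q^{d+1}} - t$ over residues --- to obtain a recursion for $S_{d+1}(s)$ in terms of $S_d(s)$ and certain power sums over $\F_q$. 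The same recursion should then govern $\Delta_d$ (since it is built from $S_d(a), S_d(b), S_d(a+b)$) and each depth-$2$ term $S_d(a_i, a+b-a_i)$ (whose outermost index is $d$ itself, with the inner factor ranging over strictly smaller degrees), so $\F_p$-linear relations among them lift term-by-term.

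The main obstacle will be matching these recursions on the nose. The quantity $\Delta_d(a,b)$ is a ``same-degree collision,'' whereas $S_d(a_i, a+b-a_i)$ is an iterated sum reaching down to arbitrarily small degrees, so subsuming both under a single operator $\Psi_d$ requires careful bookkeeping in which the inner iterated sum absorbs the degree-$d$ overlap coherently at each level. Moreover, the hypotheses $f_i \in \F_p^\times$ and the $a_i$ distinct should be used to rule out characteristic-$p$ collapses that could break the propagation; I expect they enter precisely at the step where one shows $\Psi_d$ is injective on the relevant $\F_p$-span of the candidate depth-$2$ sums.
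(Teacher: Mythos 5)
First, a point of calibration: the paper contains no proof of this theorem at all --- it is quoted from Thakur's shuffle paper \cite{Thakur_Shuffle09} --- so your proposal has to be measured against Thakur's argument. Your second half is correct and is exactly the routine part: the trichotomy $d_1>d_2$, $d_1=d_2$, $d_1<d_2$ together with $S_d(a)S_d(b)=S_d(a+b)+\Delta_d(a,b)$ reduces the shuffle relation to knowing \eqref{Delta1} for every $d$ (and at $d=0$ it holds trivially, since $\Delta_0(a,b)=0$ and $S_0(s_1,s_2)=0$).

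The genuine gap is in the propagation step, and it is a wrong approach rather than a bookkeeping issue you could patch. The parameterization $m=tm'+r$ does give a bijection $A_{(d+1)^+}\simeq A_{d^+}\times\F_q$, but $(tm'+r)^{-k}$ is not expressible through $(m')^{-k}$, so it produces no recursion carrying $S_d(k)$ to $S_{d+1}(k)$; more fundamentally, no single level-raising operator can send $\Delta_d\mapsto\Delta_{d+1}$ and simultaneously $S_d(a_i,a+b-a_i)\mapsto S_{d+1}(a_i,a+b-a_i)$, because the depth-two sum at level $d+1$ contains a brand-new inner stratum $d_2=d$ with no counterpart at level $d$ --- the very obstacle you flag, and it does not go away. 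The correct idea is of a different kind and involves no induction on $d$. At $d=1$ the hypothesis reads, as an identity in $\F_q(t)$ (note $S_0=1$, so $S_1(a_i,a+b-a_i)=S_1(a_i)$),
\begin{equation*}
\sum_{\alpha\neq\beta\in\F_q}\frac{1}{(t+\alpha)^a(t+\beta)^b}
=\sum_i f_i\sum_{\alpha\in\F_q}\frac{1}{(t+\alpha)^{a_i}}.
\end{equation*}
Apply the embedding $\F_q(t)\to\F_q(u,v)$, $t\mapsto u/v$, and multiply by $v^{-(a+b)}$ to get the homogeneous identity
\begin{equation*}
\sum_{\alpha\neq\beta}\frac{1}{(u+\alpha v)^a(u+\beta v)^b}
=\sum_i f_i\,\frac{1}{v^{a+b-a_i}}\sum_{\alpha}\frac{1}{(u+\alpha v)^{a_i}}.
\end{equation*}
Now fix $d\ge 1$, a degree $0\le e<d$ and a monic $m\in A_{e^+}$. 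Translation by $\F_q m$ partitions $A_{d^+}$ into orbits $\set{n_0+\alpha m \mid \alpha\in\F_q}$ of size $q$, and the ordered pairs $(n_1,n_2)$ of distinct monics of degree $d$ with $n_1-n_2\in\F_q^* m$ are exactly the ordered pairs of distinct elements of a single orbit. Specializing $(u,v)=(n_0,m)$, summing over orbits, then over $m\in A_{e^+}$ and over $e<d$, the left-hand sides assemble to $\Delta_d(a,b)$ (every nonzero difference equals $cm$ for a unique monic $m$ and $c\in\F_q^*$), while the right-hand sides assemble to $\sum_i f_i\,S_d(a_i)\sum_{e=0}^{d-1}S_e(a+b-a_i)=\sum_i f_i\,S_d(a_i,a+b-a_i)$. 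The inner iterated sum over all lower degrees, which your operator $\Psi_d$ could not absorb, appears here in one stroke because the degree of $n_1-n_2$ itself plays the role of the inner summation index. Finally, your guess about where the hypotheses enter is also off: the substitution argument needs only that the $f_i$ are constants; $f_i\in\F_p^\times$ and distinctness of the $a_i$ merely normalize the representation, and no injectivity of any inflation map is involved.
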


\section{Main results}

In this section we shall prove the recursion part of the main conjecture formulated by Lara
\cite{Jalr10, Jalr}, except the items concerning  the initial values.

Let us start with a definition. 
\begin{definition}\label{main_definitions} Let $a\in \Z_+$.
\begin{enumerate}
\item  We set
    \begin{align*}
    r_a = (q-1)p^m,
    \end{align*}
    where $m$ is the smallest integer such that $a\le p^m$.
\item Put
    \begin{align*}
    \phi(j):=r_a-a-j(q-1).
    \end{align*}
\item We define
    \begin{align*}
    j_{a, \max}=\left \lfloor \frac{r_a-a}{q-1} \right \rfloor.
    \end{align*}
\item Let $q$ be prime. For $0 \le j \le j_{a,\max}$, let $c_{a,j}\in \F_p $ be
defined by:
    \begin{align*}
        c_{a,j}=\begin{cases}
        1 & \mbox{ if } j =0\\
        \left \lceil \frac{j(q-1)} {j_{a, \max} } \right \rceil ^{-1}
\binom{r_a-a}{j(q-1)} & 0 < j \le j_{a, \max}.
    \end{cases}
\end{align*}
\item  For each $j,\;0\le j\le p-1$, let $\mu_j$ be the number of $j$'s in the
$p$ expansion of $a-1$. Set
\begin{align*}
t_a = \prod _{j=0}^{p-2} (p-j)^{\mu_j}.
\end{align*}
\end{enumerate}
\end{definition}

\begin{remark}
For $q$  prime and  $0 < j \le j_{a,\max}$, $\lceil j(q-1)/j_{a,\max}
\rceil$ is not zero in $\F_p$ because  $ 0 <  j(q-1)/j_{a,\max}  \le
q-1<p$. Therefore, $c_{a,j}$ in Definition~\ref{main_definitions} (4) makes
sense. For $q$ non-prime, $c_{a,j}$ is not always defined (eg., $q=4$, $a=5$,
$j = 3$).
\end{remark}

Let $a,b\in \Z_+$.  Let $j \in \set{0,1,\dotsc,p^m-a}$. Since $p^m$ and $q-1$
are coprime, $p^m$ is a unit in $\Z/(q-1)\Z$; so, the equation $j \equiv -ip^m
\bmod(q-1)$ has always a solution. There exists exactly one 
solution in the range $0 \le i <q-1$.

\begin{definition}\label{lj}
Let $i_j$ be the unique integer $0 \le i_j <q-1$ such that $j+i_j p^m
\equiv 0  \bmod (q-1)$. Let $l_j$ be the nonnegative integer defined by
\begin{align*}
  l_j = \frac{j+i_j p^m}{q-1}.
\end{align*}
\end{definition}

 In general, the correspondence $j\mapsto i_j$  between  $\{0,1,
\dotsc,p^m-a\}$ and  $\set{0,1,\dotsc,q-2}$ is neither
injective (eg., $q=3, a=4$) nor surjective (eg., $q=4, a=3$).

\begin{proposition}
  The map
\begin{align*}
\begin{array}{ccl}
\set{0,1,\dotsc,p^m-a} & \to &\set{l(q-1)\mid 0 \le l \le j_{a,\max} } \\
j & \mapsto & j+i_jp^m \\
\end{array}
\end{align*}
is injective.
\end{proposition}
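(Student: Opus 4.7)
The plan is to prove both that the map is well-defined (lands in the prescribed codomain) and injective, using only the definition of $i_j$ together with the tight bound $j \le p^m - a$ on the domain.

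First I would verify that the stated target is correct. For $j \in \{0,1,\dots,p^m-a\}$, by definition $j + i_j p^m = l_j(q-1)$ is a nonnegative multiple of $q-1$. To show $l_j \le j_{a,\max}$, I bound
\begin{align*}
j + i_j p^m \;\le\; (p^m - a) + (q-2)p^m \;=\; (q-1)p^m - a \;=\; r_a - a,
\end{align*}
using $j \le p^m - a$ and $i_j \le q-2$. Dividing by $q-1$ and taking the floor gives $l_j \le \lfloor (r_a-a)/(q-1) \rfloor = j_{a,\max}$, so the map indeed takes values in $\{l(q-1): 0 \le l \le j_{a,\max}\}$.

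For injectivity, suppose $j + i_j p^m = j' + i_{j'} p^m$ with $j, j' \in \{0,1,\dots,p^m-a\}$. Rearranging gives
\begin{align*}
j - j' = (i_{j'} - i_j)\,p^m.
\end{align*}
The left-hand side satisfies $|j - j'| \le p^m - a < p^m$, while the right-hand side is an integer multiple of $p^m$. Hence $i_{j'} - i_j = 0$, and then $j = j'$.

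The main observation driving the proof is the restriction $j \le p^m - a$: it forces $|j-j'|$ strictly below $p^m$, which is precisely what kills the $p^m$-periodicity in the congruence $j + i_j p^m \equiv 0 \pmod{q-1}$. There is no real obstacle, only the need to record the well-definedness bound carefully (so that the image sits inside the claimed set and the proposition has content).
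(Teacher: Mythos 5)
Your proposal is correct and follows essentially the same route as the paper: the same bound $j + i_j p^m \le (p^m-a) + (q-2)p^m = r_a - a$ for well-definedness, and the same divisibility argument ($p^m \mid j - j'$ with $|j - j'| \le p^m - a < p^m$) for injectivity. The only cosmetic difference is that the paper splits off the trivial case $a = p^m$ before assuming $a < p^m$, whereas you observe that $a \ge 1$ already gives $p^m - a < p^m$ uniformly, which slightly streamlines the same argument.
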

\begin{proof}
Since $0 \le j \le p^m-a$ and $0 \le i_j \le q-2$, it follows that $0 \le j
+i_jp^m
\le p^m-a + p^m(q-2)= r_a-a$. This shows that the map is well defined.
  If $a=p^m$, then the map is clearly injective. Assume $a <p^m$.  If $j_1 + i_{j_1}p^m = j_2 + i_{j_2}p^m$,
then $p^m \mid j_1 - j_2$. Since $0 \le j_1,j_2 \le p^m-a<p^m$, we conclude that
$j_1 = j_2$.
\end{proof}

When $q = 2$, the map of the above proposition  is a bijection, but in general
the map is not surjective. For instance, consider $q = 3$ and $a = 2$.

\begin{proposition}\label{prop245}
The following statements hold in $\F_p$,
\begin{enumerate}
  \item [a)] If $0\le i \le q-1$, then
\begin{align}
  \binom{q-1}{i} = (-1)^i \label{binomial(q-1,i)}.
\end{align}
\item [b)] If $0 \le i \le p-2$, then
\begin{align*}
  \binom{p-2}{i} = (-1)^i(i+1).
\end{align*}

\item [c)] The number of $j$'s,  $0 \le j \le p^m-a$, such that
$\binom{p^m-a}{j}$ is not zero modulo $p$ is $t_a$.

\end{enumerate}
\end{proposition}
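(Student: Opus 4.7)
The plan is to prove parts (a) and (b) by direct expansion of the binomial coefficient modulo $p$, and to reduce part (c) to a digit count via Lucas's theorem.

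For (a), since $q \equiv 0 \pmod p$, I would simply write
\[
\binom{q-1}{i} = \frac{(q-1)(q-2)\cdots(q-i)}{i!} \equiv \frac{(-1)(-2)\cdots(-i)}{i!} = (-1)^i \pmod p,
\]
which handles $0 \le i \le q-1$ uniformly (the case $i=0$ being the empty product). The same manipulation gives (b): using $(p-2)(p-3)\cdots(p-1-i) \equiv (-2)(-3)\cdots(-1-i) = (-1)^i(i+1)!\pmod p$, one obtains
\[
\binom{p-2}{i} \equiv \frac{(-1)^i(i+1)!}{i!} = (-1)^i(i+1) \pmod p.
\]

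For (c), I would invoke Lucas's theorem: $\binom{p^m-a}{j} \not\equiv 0 \pmod p$ precisely when each base-$p$ digit of $j$ is bounded above by the corresponding digit of $p^m - a$. So the count equals $\prod_k(d_k+1)$, where $d_k$ are the base-$p$ digits of $p^m - a$. To compute these digits, I would use that $m$ is minimal with $a \le p^m$, so $a-1 < p^m$ admits an expansion $a-1 = \sum_{k=0}^{m-1} e_k p^k$ with $0 \le e_k \le p-1$; then
\[
p^m - a = (p^m - 1) - (a-1) = \sum_{k=0}^{m-1} (p-1-e_k)\, p^k,
\]
so $d_k = p-1-e_k$ and the count is $\prod_{k=0}^{m-1}(p-e_k)$. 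Regrouping the factors by the common value $j$ of the digit $e_k$, each $j \in \{0,1,\dots,p-1\}$ contributes $(p-j)^{\mu_j}$. Since $p-(p-1) = 1$, the factor at $j = p-1$ is trivial, and the product collapses to $\prod_{j=0}^{p-2}(p-j)^{\mu_j} = t_a$.

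Honestly, none of the three parts is a genuine obstacle; the identities are standard consequences of working in characteristic $p$. The only point requiring care is the digit bookkeeping in (c) and the verification that the degenerate cases $a=1$ (where $m=0$, the product is empty, and $t_a=1$) and $a = p^m$ (where all digits of $a-1$ equal $p-1$, so again $t_a=1$ and $p^m-a=0$ forces $j=0$) fit the formula—both drop out immediately.
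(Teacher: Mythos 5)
Your parts (b) and (c) are correct and essentially identical to the paper's own argument: (b) is the same telescoping expansion (legitimate there because $i \le p-2 < p$ makes $i!$ invertible modulo $p$), and (c) is the same application of Lucas's theorem to the $m$-digit base-$p$ expansions of $a-1$ and $j$, counting $\prod_{k}(p-a_k)$ and regrouping by digit value exactly as the paper does.

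Part (a), however, has a genuine gap. The cancellation
\[
\binom{q-1}{i} = \frac{(q-1)(q-2)\dotsm(q-i)}{i!} \equiv \frac{(-1)(-2)\dotsm(-i)}{i!} = (-1)^i \pmod p
\]
is only valid when $i!$ is invertible modulo $p$, i.e., when $i \le p-1$. But the proposition allows $i$ up to $q-1$, and $q = p^s$ need not be prime: for $i \ge p$ the numerator contains the factors $q-p, q-2p, \dotsc$, all divisible by $p$, and $i! \equiv 0 \pmod p$ as well, so the termwise replacement $q-k \mapsto -k$ followed by division by $i!$ amounts to a $0/0$ computation in $\F_p$ and proves nothing. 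Concretely, for $q=4$, $p=2$, $i=2$, your display asserts $\binom{3}{2} \equiv \frac{(-1)(-2)}{2} \pmod 2$, where both numerator and denominator vanish mod $2$. So the claim that the computation ``handles $0 \le i \le q-1$ uniformly'' is false; it is complete only when $q$ is prime. This is precisely why the paper proves the identity by your computation only for $0 \le i \le p-1$ and then handles general $i$ via Lucas's theorem: the base-$p$ expansion of $q-1$ is $\sum_{l=0}^{s-1}(p-1)p^l$, so with $i = \sum_l i_l p^l$ one gets $\binom{q-1}{i} = \prod_l \binom{p-1}{i_l} = \prod_l (-1)^{i_l p^l} = (-1)^i$. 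If you prefer to avoid Lucas, you can instead repair the direct argument by pairing factors: $\binom{q-1}{i} = \prod_{k=1}^{i} \frac{q-k}{k}$, and writing $k = p^v k'$ with $p \nmid k'$ (note $v < s$) gives $\frac{q-k}{k} = \frac{p^{s-v}-k'}{k'}$, a ratio of integers prime to $p$ that is congruent to $-1$ modulo $p$; the product of $i$ such ratios is $(-1)^i$. As written, though, your proof of (a) is incomplete for non-prime $q$.
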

\begin{proof}
a) First consider the case $0 \le i \le p-1$. Then,
$(p-1)!$, $i!$, and $(p-1-i)!$ are nonzero modulo $p$; therefore, in $\F_p$, we
have
\begin{align*}
\binom{p-1}{i}& = \frac{(p-1)(p-2) \dotsm (p-i)}{i!} 
 =\frac {(-1)(-2) \dotsm (-i)}{i!} 
 = (-1)^i. 
\end{align*}
Thus, \eqref{binomial(q-1,i)} holds in this special case.

For general $i$, let $i = \sum _{l=0}^{s-1} i_l p^l$ be the base $p$ expansion
of $i$. The base $p$ expansion of  $q-1$ is $\sum_{l=0}^{s-1} (p-1)p^l$. Note
that $(-1)^{a_l} = (-1)^{a_lp^l}$.  By Lucas theorem, we conclude
\begin{align*}
\binom{q-1}{i} = \prod _{l=0}^{s-1} \binom{p-1}{i_l}  = \prod _{l=0}^{s-1}
(-1)^{i_lp^l} = (-1)^i.
\end{align*}
Thus, \eqref{binomial(q-1,i)} holds in the general case.

b) For the second part,  we have
\begin{align*}
  \binom{p-2}{i}  = \frac{(p-2) \dotsm (p-2-(i-1))}{i!} 
 = \frac{(-2) \dotsm (-(i+1))}{i!} 
 = (-1)^i(i+1).
\end{align*}

c)
Let $t_a'$ be the number of $j$'s in $\set{0,1,\dotsc,p^m-a}$   such
that $\binom{p^m-a}{j} \not\equiv 0 \bmod p$. 
Let us prove that $t_a'= t_a$. Since $a\le p^m$, then $a-1
\le p^m-1 = \sum _{l=0}^{m-1} (p-1)p^l$.   Let $a-1 = \sum_{l=0}^{m-1}a_lp^l$
and $j = \sum _{l=0}^{m-1}b_l p^l$ be the
base $p$ expansions of $a-1$ and $j$, respectively. Since $p^m-1-(a-1)=p^m-a$,
the base $p$ expansion of $p^m-a$ is $\sum _{l=0}^{m-1}(p-1-a_l)p^{l}$. By
Lucas theorem, 
\begin{align*}
  \binom{p^m-a } { j } \equiv
\prod _{l=0}^{m-1} \binom{p-1-a_l}{b_l} \bmod p.
\end{align*}
Since $\binom{\alpha}{\beta}$ vanishes modulo $p$ if $\beta>\alpha$, by choosing
$b_l$ in the  set $\{0,1,\dotsc,p-1-a_l\}$, we guarantee that
$\binom{p^m-a}{j}$ does not vanish modulo $p$. Therefore, 
\begin{align*}
  t_a'= \prod _{l=0}^{m-1}(p-a_l) = \prod _{k=0}^{p-2}(p-k)^{\mu_k} =
t_a.
\end{align*}
\end{proof}

\begin{proposition}
Let $q = 2$. Then,   $j_{a,\max}=0$ if and only if  $a=p^m$. If $q >2$, then
$j_{a,\max}=0$ if and only if $a=1$.
\end{proposition}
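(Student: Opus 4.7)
The plan is to work directly from the definition
\[
j_{a,\max} = \left\lfloor \frac{r_a - a}{q-1} \right\rfloor, \qquad r_a = (q-1)p^m,
\]
where $m$ is the least integer with $a \le p^m$. In particular, $j_{a,\max} = 0$ is equivalent to $0 \le r_a - a < q-1$, i.e.\ to the double inequality
\[
(q-1)(p^m - 1) < a \le (q-1)p^m.
\]
Since $a \le p^m$ by the minimality of $m$, this condition is equivalent to $(q-1)(p^m-1) < a \le p^m$.

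For $q = 2$, we have $q - 1 = 1$, so $j_{a,\max} = r_a - a = p^m - a$, which vanishes precisely when $a = p^m$. That handles the first assertion immediately.

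For $q > 2$, I would split on $m$. If $m = 0$, then $p^m = 1$, forcing $a = 1$; a direct check gives $r_1 = q-1$ and $j_{1,\max} = \lfloor (q-2)/(q-1)\rfloor = 0$, so $a = 1$ works. If $m \ge 1$, I would show the window is empty: since $q - 1 \ge 2$ and $p^m \ge 2$,
\[
(q-1)(p^m - 1) \ge 2(p^m - 1) \ge p^m,
\]
so no integer $a$ can satisfy $(q-1)(p^m-1) < a \le p^m$. Conversely, $a = 1$ (which corresponds to $m = 0$) was already verified to give $j_{a,\max} = 0$.

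The main (minor) obstacle is bookkeeping around the minimality of $m$: one must remember that the inequality $a \le p^m$ is built into the definition, so that the upper bound in the window $(q-1)(p^m-1) < a \le (q-1)p^m$ collapses to $a \le p^m$, which is what makes the $m \ge 1$ case empty for $q \ge 3$. Once that is isolated, both equivalences follow from elementary estimates.
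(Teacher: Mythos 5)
Your proof is correct, and it is exactly the ``straight calculation from the definitions'' that the paper's proof consists of (the paper gives no further detail); your case analysis on $m$ and the emptiness of the window $(q-1)(p^m-1) < a \le p^m$ for $q>2$, $m\ge 1$ all check out. No gaps.
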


\begin{proof} This follows by a straight calculation from the definitions. 
\end{proof}

\begin{definition}\label{faj}
For each $j$, $0 \le j \le p^m-a$, let $f_{a,j}\in \F_p$  be defined by
\begin{align*}
f_{a,j} & = \binom{p^m-a}{j} (-1)^{j}. 
\end{align*}
\end{definition}

\begin{proposition}\label{faj_ne_0}
  The number of $j$'s such that $f_{a,j} \ne 0$ is $t_a$.
\end{proposition}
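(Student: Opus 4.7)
The plan is essentially to observe that this proposition is an immediate corollary of Proposition~\ref{prop245}(c), so the real work has already been done and no genuine obstacle remains.

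First I would note that by the definition of $f_{a,j}$ in Definition~\ref{faj}, we have
\begin{align*}
f_{a,j} = \binom{p^m-a}{j}(-1)^j,
\end{align*}
and since $(-1)^j$ is a unit in $\F_p$ for every $j$, the condition $f_{a,j} \ne 0$ in $\F_p$ is equivalent to the condition $\binom{p^m-a}{j} \not\equiv 0 \bmod p$.

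Next I would appeal directly to Proposition~\ref{prop245}(c), which asserts precisely that the number of $j \in \{0, 1, \dotsc, p^m - a\}$ with $\binom{p^m-a}{j} \not\equiv 0 \bmod p$ equals $t_a = \prod_{k=0}^{p-2}(p-k)^{\mu_k}$. Combined with the equivalence in the previous paragraph, this gives the desired count.

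The main (and only) conceptual content is already in Proposition~\ref{prop245}(c), whose proof used Lucas' theorem together with the observation that, writing $a-1 = \sum a_l p^l$ in base $p$, the base-$p$ digits of $p^m - a$ are $p - 1 - a_l$, so $\binom{p^m-a}{j}$ survives modulo $p$ precisely when each base-$p$ digit $b_l$ of $j$ lies in $\{0, 1, \dotsc, p-1-a_l\}$, yielding $\prod_l (p - a_l) = t_a$ choices. Thus there is no additional obstacle here; the proposition is a one-line consequence once $(-1)^j$ is factored out.
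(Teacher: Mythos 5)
Your proof is correct and takes essentially the same route as the paper, whose entire proof is ``It follows immediately from Proposition~\ref{prop245} (c).'' Your only addition is making explicit that $(-1)^j$ is a unit in $\F_p$, which the paper leaves implicit.
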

\begin{proof}
It follows immediately from Proposition~\ref{prop245} (c).
\end{proof}

When $q$ is prime, we have another description for $f_{a,j}$ which is
consistent with the Part 3 of the main conjecture in \cite{Jalr10} (See
the Remark~\ref{rem517} (1)).

\begin{proposition}\label{prop247}
If $q$ is prime (so that $q=p$), then for $j\in \set{1,2,\dotsc, p^m-a}$
\begin{align*}
  f_{a,j} = \left \lceil
\frac{j+i_j p^m}{j_{a,\max}} \right \rceil ^{-1} \binom{r_a-a}{j+i_j p^m} =
c_{a,l_j},
\end{align*}
where $c_{a,l_j}$ is as defined in \ref{main_definitions} and 
\ref{lj}. 
\end{proposition}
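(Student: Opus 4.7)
My plan is to use Lucas' theorem to match the two sides modulo $p$, while showing that the ceiling factor on the right equals $i_j + 1$ as an integer.

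The second equality $\lceil (j + i_j p^m)/j_{a,\max}\rceil^{-1} \binom{r_a - a}{j + i_j p^m} = c_{a, l_j}$ is immediate from Definition~\ref{main_definitions}~(4) together with $l_j(q - 1) = j + i_j p^m$ as given in Definition~\ref{lj}, so all the content lies in the first equality. For that, I apply Lucas' theorem: since $q = p$ is prime and $a \le p^m$, the base-$p$ expansion of $r_a - a = (p - 1)p^m - a$ has digit $p - 2$ at position $m$ and the digits of $p^m - a < p^m$ below, while $j + i_j p^m$ has digit $i_j \le p - 2$ at position $m$ and the digits of $j < p^m$ below. Lucas' theorem and Proposition~\ref{prop245}~(b) therefore give
\begin{align*}
\binom{r_a - a}{j + i_j p^m} \equiv \binom{p - 2}{i_j}\binom{p^m - a}{j} \equiv (-1)^{i_j}(i_j + 1)\binom{p^m - a}{j} \pmod{p}.
\end{align*}

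The main obstacle is proving $\lceil (j + i_j p^m)/j_{a,\max}\rceil = i_j + 1$ as integers. A short computation from the definition gives $j_{a,\max} = p^m - \alpha$ with $\alpha := \lceil a/(p - 1) \rceil$, and
\begin{align*}
\frac{j + i_j p^m}{j_{a,\max}} = i_j + \frac{j + i_j \alpha}{p^m - \alpha},
\end{align*}
so the claim reduces to $1 \le j + i_j \alpha$ (immediate for $j \ge 1$) and $j + (i_j + 1)\alpha \le p^m$. I prove the upper bound by sharpening $j \le p^m - a$ using the congruence $j \equiv -i_j \pmod{p - 1}$ (which follows from $j + i_j p^m \equiv 0 \pmod{p - 1}$ and $p^m \equiv 1 \pmod{p - 1}$). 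Writing $s := a - (p - 1)(\alpha - 1) \in \{1, \dotsc, p - 1\}$ and letting $\epsilon \in \{0, \dotsc, p - 2\}$ be the unique value with $\epsilon \equiv 1 + i_j - s \pmod{p - 1}$, one has $j \le p^m - a - \epsilon$. Substituting $a = (p - 1)(\alpha - 1) + s$ and unfolding $\epsilon$ shows $j + (i_j + 1)\alpha \le p^m + (i_j + 2 - p)(\alpha - 1)$ when $i_j \ge s - 1$ and $j + (i_j + 1)\alpha \le p^m + \alpha(i_j + 2 - p) - (i_j + 1)$ when $i_j < s - 1$; in both cases the bound is $\le p^m$, since $i_j \le p - 2$ and $\alpha \ge 1$.

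Assembling the pieces, $c_{a, l_j} \equiv (i_j + 1)^{-1}(-1)^{i_j}(i_j + 1)\binom{p^m - a}{j} = (-1)^{i_j}\binom{p^m - a}{j} \pmod{p}$, so matching against $f_{a, j} = (-1)^j \binom{p^m - a}{j}$ reduces the identity to $(-1)^{j - i_j} \equiv 1 \pmod{p}$. Since $j + i_j \equiv 0 \pmod{p - 1}$ and $p - 1$ is even for odd $p$, $j - i_j$ is then even and the sign is $+1$; for $p = 2$ the identity holds trivially in $\F_2$.
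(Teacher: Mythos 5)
Your proof is correct, and it shares the paper's overall skeleton: both arguments reduce the first equality to the integer identity $\left\lceil (j+i_jp^m)/j_{a,\max}\right\rceil = i_j+1$, then apply Lucas' theorem digit by digit together with Proposition~\ref{prop245}~(b) to get $\binom{r_a-a}{j+i_jp^m}\equiv(-1)^{i_j}(i_j+1)\binom{p^m-a}{j}\bmod p$, and finally use $j+i_j\equiv 0\bmod(q-1)$ to identify $(-1)^{i_j}$ with $(-1)^j$ (the paper's step (B)). Where you genuinely diverge is in the proof of the ceiling identity, which is the technical heart. The paper computes $i_j$ explicitly ($i_j=q-1-r$ when $j=l(q-1)+r$ with $r>0$, and $i_j=0$ otherwise) and verifies the key inequality $j+i_jp^m\le(i_j+1)j_{a,\max}$ through a two-case floor computation ($j\not\equiv 1$ versus $j\equiv 1\bmod(q-1)$, with auxiliary quantities $j_0$, $u$, $z$) that is laborious to follow; you instead write $j_{a,\max}=p^m-\alpha$ with $\alpha=\lceil a/(p-1)\rceil$, reduce the claim algebraically to the single inequality $j+(i_j+1)\alpha\le p^m$, and obtain it by sharpening $j\le p^m-a$ to $j\le p^m-a-\epsilon$ via the congruence $j\equiv-i_j\bmod(p-1)$; both of your cases ($i_j\ge s-1$ and $i_j<s-1$) close as stated, and the two inequalities are in fact equivalent, since $(i_j+1)(p^m-\alpha)=i_jp^m+p^m-(i_j+1)\alpha$. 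What the paper's version buys is the explicit formula for $i_j$, which it records along the way; what yours buys is transparency and a much shorter verification. One point you should make explicit rather than leave implicit: invoking Definition~\ref{main_definitions}~(4) for $c_{a,l_j}$ requires $0<l_j\le j_{a,\max}$, which follows from $j\ge 1$ together with the bound $j+i_jp^m\le r_a-a$ established in the paper's injectivity proposition, and the invertibility of the ceiling factor in $\F_p$ uses $i_j+1\le p-1$, which your computation supplies.
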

\begin{proof}
We claim that (A) $i_j+1 = \left \lceil \frac{j+i_jp^m}{j_{a,\max}} \right
\rceil$, which is equivalent to showing
$$i_jj_{a, \max} \leq i_jp^m < j+i_jp^m\leq (i_j+1)j_{a,\max}.$$
It is enough to  prove the rightmost inequality. To do so, we first compute
$i_j$ for $j>0$, as follows. Write $j = l(q-1)+r$, $0 \le r <q-1$. If $r>0$,
then
\begin{align*}
  j + (q-1-r)p^m = (l+p^m)(q-1) + r(1-p^m) \equiv 0
\bmod(q-1),
\end{align*}
and $0 < q-1-r<q-1$. If $r = 0$, then $j \equiv 0 \bmod (q-1)$. By definition,
it follows that $i_j=q-1-r$ if $r>0$ and $i_j = 0$, otherwise. Note 
$0 \le i_j \le q-2$,  so that $i_j+1$ is non-zero in $\F_p$.  Write
$a = l'(q-1)+a'$, $0\le a' <q-1$. Then $j_{a,\max} = p^m-l'-y$, where $y=0$ if
$a'=0$ and $y=1$ otherwise. Let us consider first the case $j\not \equiv 1
\bmod(q-1)$. Thus $i_j\ne q-2$.  Let $j_0 \in \set{2,\dotsc,p^m-a} \cap
\set{2,\dotsc,q-1}$ such that $i_j = i_{j_0}$. Then $j_0 =q-u$ for some $u$,
$1\le u \le q-2$ and $i_{j_0} = u-1$, $0 \le i_{j_0} <q-2$. Then 
\begin{align*}
j + i_{j} p^m & \le  j_{0} +i_{j_0}p^m + (q-1) \left \lfloor
\frac{p^m-a+i_jp^m -(j_0 +i_{j_0}p^m)}{q-1} \right \rfloor\\
& = q-u + (u-1)p^m +(q-1) \left \lfloor \frac{p^m-a-j_0}{q-1} \right \rfloor\\
& = q-u+(u-1)p^m+ p^m-1-l'(q-1)-z(q-1),
\end{align*}
where $z=1$ if $0< (a' +j_0-1)/(q-1) \le 1$ and $z=2$ if $1 <(a' +j_0-1)/(q-1)
<1 +(q-2)/(q-1)$. Since $u\le q-1$, then $l'u \le l'(q-1) \le l'(q-1) +
(z-1)(q-1)$. Therefore
\begin{align*}
  j + i_j p^m = up^m-u -l'(q-1)-(z-1)(q-1) \le up^m-l'u -yu =
(i_j+1)j_{a, \max}.
\end{align*}
On the other hand, if $j \equiv 1 \bmod(q-1)$, then $i_j+1=q-1$.
Since $j\le p^m-a$, then $j + i_j p^m \le r_a-a$. Now,
\begin{align*}
  \frac{j+i_jp^m}{i_j+1} = \frac{j+i_jp^m}{q-1} \le \frac{r_a-a}{q-1}.
\end{align*}
The claim now follows as the left side is an integer.

Let $j = b_0 + b_1 p + \dotsb + b_{m-1}p^{m-1}$ and $a-1 =
a_0 + a_1 p + \dotsb + a_{m-1}p^{m-1}$ be the base
$p$ expansions of $j$ and $a-1$, respectively. The base $p$ expansion of
$p^m(p-1)-1$ is $\sum _{l=0}^{m-1} (p-1)p^{l}
+ (p-2)p^m$. Consequently, the base $p$ expansion of $r_a-a = r_a-1 -(a-1)$ is
$
    \sum _{l=0}^{m-1}(p-1-a_l)p^l +(p-2)p^m.
$
Finally, the base $p$ expansion of $j+i_jp^m$ is $\sum_{l=0}^{m-1}b_l p^l +
i_jp^m$. 

Note $j+i_j \equiv j+ i_jp^m
\equiv 0 \bmod (q-1)$, 
so, (B)  $(-1)^{i_j}=(-1)^j$.  By the Lucas theorem,  b)  of
Proposition~\ref{prop245}, and (A), (B) 
respectively, we have 
\begin{align*}
  \binom{r_a-a}{j+i_jp^m} & = \binom{p-1-a_0}{b_0} \dotsm
\binom{p-1-a_{m-1}}{b_{m-1}} \binom{p-2}{i_j} \\
& =  \binom{p^m-a}{j} (-1)^{i_j} (i_j+1) \\
& = f_{a,j} \left \lceil
\frac{j+i_jp^m}{j_{a,\max}} \right \rceil.
\end{align*}
\end{proof}

\begin{proposition}\label{prop249}
Let  $q$ be arbitrary  and $a\in \Z_+$. For each  $n \in A_{1^+}$, let $g_n$ be
defined by
\begin{align*}
  g_n = - \frac{[1]^{p^m-a}}{n^{p^m-a}}[1]^aS_1(a).
\end{align*}
Then, 
\begin{align*}
  g_n = 1 + \sum _{j=1}^{p^m-a} f_{a,j} n^{r_a -(j+i_j p^m) }
\end{align*}
where $f_{a,j}$ is as in Definition~\ref{faj} and  $i_j$ is the unique integer,
$0 \le i_j <q-1$, such that $j+i_j p^m
\equiv 0  \bmod (q-1)$.
\end{proposition}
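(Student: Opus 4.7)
The plan is to reduce the identity to a single-variable rational computation by substituting $n = t + \theta_0$ for the $\theta_0 \in \F_q$ making $n$ the given monic of degree one. Then $[1] = t^q - t = n^q - n$, and reindexing gives $S_1(a) = \sum_{\eta \in \F_q}(n+\eta)^{-a}$. Combining $[1]^{p^m - a}/n^{p^m - a} = (n^{q-1} - 1)^{p^m - a}$ with $[1]^a = n^a(n^{q-1} - 1)^a$ and the Frobenius identity $(n^{q-1} - 1)^{p^m} = n^{r_a} - 1$, the prefactor collapses to
$$g_n = n^a (1 - n^{r_a}) \sum_{\eta \in \F_q}(n+\eta)^{-a}.$$

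Second, I would expand at infinity: $(n + \eta)^{-a} = \sum_{k \ge 0}\binom{-a}{k}\eta^k n^{-a-k}$, sum over $\eta$, and use that $\sum_{\eta \in \F_q}\eta^k$ equals $-1$ when $(q-1) \mid k$ and $k \ge 1$, and $0$ otherwise. This collapses the inner sum to $n^a \sum_\eta(n+\eta)^{-a} = -\sum_{l \ge 1}\binom{-a}{l(q-1)} n^{-l(q-1)}$. Since $g_n$ is visibly a polynomial in $n$ (the original expression has $n \mid [1]$ cancelling $n^{p^m - a}$), multiplying by $(1 - n^{r_a})$ and reading off the polynomial part in the unique Laurent expansion at $\infty$ gives
$$g_n = \sum_{l=1}^{p^m}\binom{-a}{l(q-1)} n^{r_a - l(q-1)}.$$

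The remaining and main step is to identify each coefficient $\binom{-a}{l(q-1)}$ modulo $p$ with the correct $f_{a,j}$ via Lucas' theorem. Using $\binom{-a}{k} \equiv \binom{p^{m+s} - a}{k} \bmod p$ for $k < p^{m+s}$, and writing $l(q-1) = j + i p^m$ with $0 \le j < p^m$ and $0 \le i \le q-2$, Lucas applied to the digit description of $p^{m+s} - a$ (digits $p - 1 - a_k$ at positions $k < m$, where $a - 1 = \sum a_k p^k$, and $p - 1$ at positions $m \le k < m + s$) splits the coefficient as $\binom{p^m - a}{j}(-1)^i$. The main subtlety I foresee is verifying $(-1)^i = (-1)^j$, which is automatic in characteristic $2$ and in odd characteristic follows from $q - 1$ being even together with the defining relation $j + i p^m \equiv 0 \bmod (q - 1)$. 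When $j > p^m - a$ the binomial $\binom{p^m - a}{j}$ vanishes; otherwise the injection of the preceding proposition matches the surviving indices bijectively with $\{l_j : 0 \le j \le p^m - a\}$, forcing $i = i_j$. The term $l = p^m$ (matching $j = 0$) contributes the constant $1$ by a routine Lucas check, and the remaining terms recover $\sum_{j=1}^{p^m-a} f_{a,j} n^{r_a - (j + i_j p^m)}$.
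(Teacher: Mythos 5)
Your proof is correct, but it takes a genuinely different route from the paper's. The paper first quotes Thakur's explicit power-sum formula \cite[3.3]{Thakur_Multizeta08} to see that $g_n\in\F_p[n]$ with all exponents `even', uses invariance under $t\mapsto t+\theta$ to reduce the computation to $n=t$, and then evaluates $[1]^{p^m}S_1(a)=\sum_{n\in A_{1^+}}n^{p^m-a}(n^{q-1}-1)^{p^m}$ directly: it splits off the $n=t$ term, expands each translate $h_\theta=(t-\theta)^{p^m-a}\bigl((t-\theta)^{q-1}-1\bigr)^{p^m}$ binomially in $\theta$, and kills terms with the character sum $\sum_{\theta\in\F_q^*}\theta^l$, so that $f_{a,j}=(-1)^j\binom{p^m-a}{j}$ and the exponents $r_a-(j+i_jp^m)$ appear immediately, with no negative binomials anywhere. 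You instead collapse the prefactor by Frobenius to $g_n=n^a(1-n^{r_a})S_1(a)$, expand at infinity to get $g_n=\sum_{l=1}^{p^m}\binom{-a}{l(q-1)}n^{r_a-l(q-1)}$ (extracting the polynomial part is legitimate since $g_n$ is visibly a polynomial and Laurent expansions at $\infty$ are unique, so the negative-exponent tails must cancel), and then convert $\binom{-a}{l(q-1)}$ into $f_{a,j}$ via $\binom{-a}{k}\equiv\binom{p^{m+s}-a}{k}\bmod p$ for $k<p^{m+s}$ together with a Lucas digit analysis; your digit bookkeeping, including the sign identity $(-1)^i=(-1)^j$ deduced from $j+ip^m\equiv 0\bmod(q-1)$ and the uniqueness/injectivity forcing $i=i_j$, closely parallels the digit arguments the paper deploys elsewhere (Propositions~\ref{prop247} and \ref{prop516}) rather than in this proof. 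What your route buys: it is self-contained (no appeal to the quoted power-sum formula, and no translation-invariance reduction is needed) and yields a uniform closed form for every coefficient at once; the cost is the negative-binomial-to-Lucas conversion, which the paper's direct expansion of $h_\theta$ sidesteps entirely. Two small points to tidy: your stated range $0\le i\le q-2$ in the division $l(q-1)=j+ip^m$ fails precisely at $l=p^m$ (there $i=q-1$, $j=0$), but since you treat that term separately and verify it yields the constant $1$, no harm results; and note that $\sum_{\eta\in\F_q}\eta^0=q\equiv 0$, so the $k=0$ term drops out, consistent with your character-sum statement.
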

\begin{proof}
Note 
\begin{align*}
  -\frac{[1]^{p^m-a}}{n^{p^m-a}} 
= -(n^{q-1}-1)^{p^m-a} 
= \sum _{j_1=0}^{p^m-a} \binom{p^m-a}{j_1}(-1)^{p^m-a-j_1+1} n^{j_1(q-1)}.
\end{align*}
On the other hand, by specializing \cite[3.3]{Thakur_Multizeta08} to $d=1$ we
see that 
\begin{align*}
  S_1(k+1) =\frac{(-1)^{k+1}}{[1]^{k+1}} \left(
1 + \sum _{k_1=1}^{\lfloor k/q \rfloor} \binom{k-k_1(q-1)}{k_1}(-1)^{k_1}
[1]^{k_1(q-1)}
\right).
\end{align*}
Hence,
\begin{align*}
  [1]^a S_1(a) 
& = (-1)^a + \sum _{j_2=1} ^{n_a}
\binom{
a-1-j_2(q-1) }{j_2} (-1)^{a+j_2} n^{j_2(q-1)}(n^{q-1}-1)^{j_2(q-1)}.
\end{align*}
where $n_a = { \lfloor (a-1)/q \rfloor
}$. Since
\begin{multline*}
  n^{j_1(q-1)} n^{j_2(q-1)}(n^{q-1}-1)^{j_2(q-1)} \\
 = \sum_{j_3=0}^{j_2(q-1)} \binom{j_2(q-1)}{j_3} 
(-1)^{j_2(q-1)-j_3} n^{(j_1+j_2+ j_3)(q-1)},
\end{multline*}
we see that $g_n =  -(n^{q-1}-1)^{p^m-a} [1]^a
S_1(a)\in \F_p[n]$. Note that all powers of
$n$ involved are `even'.  If $n = t-\theta$ for some $\theta\in \F_q$, then
$g_n(t+\theta) =g_{t}$. If $g_n = \sum _{j} c_{n,j}n^{j(q-1)}$, it follows that
$c_{n,j} = c_{t,j}$ for all $j$. Now it suffices to compute $c_{t,j}$. To do so
let us put  $h_\theta = (t-\theta)^{p^m-a} \left(
(t-\theta)^{q-1}-1 \right)^{p^m}$, $\theta \in \F_q^*$.
 Then,
\begin{align*} 
h_\theta & = \sum _{j=0}^{p^m-a} \binom{p^m-a}{j} t^{p^m-a-j}\theta^j (-1)^j
\left(
\sum_{i=0}^{q-1} \binom{q-1}{i}^{p^m} t^{p^m(q-1-i)}\theta^{i p^m }(-1)^i
-1\right) \\
& = \sum _{j=0}^{p^m-a} f_{a,j} t^{p^m-a-j}
\left(
\sum_{i=0}^{q-1}  t^{p^m(q-1-i)}\theta^{j+ i p^m }
- \theta^j  \right).
\end{align*}
Hence, 
\begin{align*}
 \sum _{\theta \in \F_q^*} h_\theta & = 
\sum _{j=0}^{p^m-a} f_{a,j} t^{p^m-a-j}
\left(
\sum_{i=0}^{q-1}  t^{p^m(q-1-i)}  \sum
_{\theta\in \F_q^*} \theta^{j+ip^m} -  \sum _{\theta\in \F_q^*} \theta^j
\right) \\
& = \sum _{j=0}^{p^m-a} f_{a,j} t^{p^m-a-j}
\left(
\sum_{i=0}^{q-2}  t^{p^m(q-1-i)}  \sum
_{\theta\in \F_q^*} \theta^{j+ip^m} 
\right)\\
& =  \sum _{j=0}^{p^m-a} f_{a,j} t^{p^m-a-j}
\left(
  t^{p^m(q-1-i_j)} (-1)  
\right)\\
& = - \sum _{j=0}^{p^m-a} f_{a,j} t^{r_a-a -(j+i_jp^m)+p^m}
\end{align*}
Here, we used that $\sum_{\theta\in \F_q^*} \theta^l$ is 0 if $q-1$ does not
divide $l$, and $-1$ if $l\ge 0$ is divisible by $q-1$. Now, 
\begin{align*}
[1]^{p^m}S_1(a) 
 & = \sum _{n\in A_{1^+}} \frac{[1]^{p^m}}{n^a}
 =  \sum _{n\in A_{1^+}} n^{p^m-a} \left(n^{q-1} -1\right)^{p^m} \\
 &= t^{p^m-a}(t^{r_a}-1) + \sum _{\theta \in \F_q^*} h_\theta.
\end{align*}
It follows that
\begin{align*}
g_t & 
 = -t^{r_a}+1 + \sum _{j=0}^{p^m-a} f_{a,j} t^{r_a-(j+i_jp^m)} 
 =  1 + \sum _{j=1}^{p^m-a} f_{a,j}
t^{r_a-(j+i_jp^m)}.
\end{align*}
\end{proof}

\begin{example}
  Let $q=9$ and $a = 17$. Then, $m = 3$ and $r_a = 27(8) = 216$. Since $a-1 = 1
+2p + p^2$, $t_a =3^0 2^2=4$. The $j$'s for which $f_{a,j}\ne 0$ are 0, 1, 9,
and 10. Using that $0\mapsto 0$, $1\mapsto 5$, $9\mapsto 5$, and $10\mapsto
2$, we have
\begin{align*}
  g_t = 1 + f_{a,1}t^{216-136} + f_{a,9}t^{216-144} + f_{a,10}t^{216-64} 
 = 1 +2t^{80} + 2t^{72} + t^{152}.
\end{align*}
\end{example}

\begin{definition}
Let $q$ be arbitrary. For  $a,b\in \Z_+$, let $S(a,b)$ denote the  pairs
$(f_i,a_i)$ such \eqref{Delta1} holds for $d = 1$. 
\end{definition}

The following theorem proves parts 1 and 7 of the main conjecture 1.5 in
\cite{Jalr10}, and is more precise in that it gives a complete recipe 
for general $q$. 

\begin{theorem}\label{thm2412}
Let $a,b\in \Z_+$. Let $r_a, i_j, f_{a, j}$  be as in definitions
\ref{main_definitions}, \ref{lj}, \ref{faj}. Then
\begin{align*}
  \Delta(a,b+r_a) - \Delta(a,b) = \sum _{j=0}^{p^m-a} f_{a,j}
S_1(a+b+(j+i_jp^m)).
\end{align*}
In particular, the sets
$S(a,b)$ can be found recursively with recursion length $r_a$, by
\begin{align*}
S(a,b+r_a )= S(a,b)\cup T(a,b+r_a),
\end{align*}
where
\begin{align*}
  T(a,b+r_a) = \set{ \left( f_{a,j}, a+b+(j+i_jp^m)  \right) \mid 0 \le j \le
p^m-a, f_{a,j}\ne 0 },
\end{align*}
or equivalently, 
\begin{align*}
T(a,b+r_a)  = \set{\left( f_{a,j}, b+r_a - \phi(l_j)  \right) \mid 0 \le j \le
p^m-a, f_{a,j}\ne 0   },
\end{align*}
where  $\phi(l_j) = r_a-a-l_j(q-1)$.

 The set $T(a,b+r_a)$ is a set of size $t_a$.
\end{theorem}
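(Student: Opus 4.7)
The plan is to convert the identity on $\Delta(a,b+r_a)-\Delta(a,b)$ into a computation powered by Proposition~\ref{prop249}. The starting observation is that for every $n \in A_{1^+}$ one has $n^{r_a}-1 = (n^{q-1}-1)^{p^m} = ([1]/n)^{p^m}$, which rewrites as
\begin{align*}
\frac{1}{n_2^{b+r_a}} - \frac{1}{n_2^b} = -\frac{[1]^{p^m}}{n_2^{b+qp^m}}.
\end{align*}
Summing this against $1/n_1^a$ over pairs of distinct elements $n_1, n_2 \in A_{1^+}$ will give the preliminary reduction
\begin{align*}
\Delta(a, b+r_a) - \Delta(a, b) = -[1]^{p^m}\, \Delta(a, b+qp^m).
\end{align*}
I will then split $\Delta(a,b+qp^m) = S_1(a)S_1(b+qp^m) - S_1(a+b+qp^m)$ and handle the two pieces separately. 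The single-sum piece is immediate: applying $[1]^{p^m}/n^{p^m} = n^{r_a}-1$ inside the sum gives $[1]^{p^m} S_1(a+b+qp^m) = S_1(a+b) - S_1(a+b+r_a)$.

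The heart of the argument is the evaluation of $-[1]^{p^m} S_1(a) S_1(b+qp^m)$. Multiplying the identity of Proposition~\ref{prop249} through by $n_1^{p^m-a}$ produces a per-$n_1$ identity
\begin{align*}
-[1]^{p^m} S_1(a) = n_1^{p^m-a} + \sum_{j=1}^{p^m-a} f_{a,j}\, n_1^{qp^m-a-(j+i_j p^m)},
\end{align*}
valid for every $n_1 \in A_{1^+}$. I plan to multiply this by $1/n_2^{b+qp^m}$ and sum over pairs $(n_1,n_2)\in A_{1^+}^2$ with $n_1 \ne n_2$. The left side will collapse to $-(q-1)[1]^{p^m} S_1(a) S_1(b+qp^m) = [1]^{p^m} S_1(a) S_1(b+qp^m)$, since $q-1 \equiv -1 \bmod p$. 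On the right, the decomposition $\sum_{n_1\ne n_2} n_1^\alpha/n_2^\beta = (\sum_{n_1} n_1^\alpha)\, S_1(\beta) - S_1(\beta-\alpha)$ (valid for $0 \le \alpha < \beta$) will produce a coefficient of $S_1(b+qp^m)$ equal to $\sum_{n_1} n_1^{p^m-a} + \sum_{j\ge 1} f_{a,j} \sum_{n_1} n_1^{qp^m-a-(j+i_j p^m)}$. This coefficient will vanish: summing the per-$n_1$ identity directly over $n_1 \in A_{1^+}$ yields exactly this expression on the right while producing $q\cdot(-[1]^{p^m} S_1(a)) = 0$ on the left, using $q \equiv 0 \bmod p$. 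The surviving terms then read
\begin{align*}
-[1]^{p^m} S_1(a) S_1(b+qp^m) = S_1(a+b+r_a) + \sum_{j=1}^{p^m-a} f_{a,j}\, S_1(a+b+(j+i_j p^m)).
\end{align*}

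Assembling the two computations, the $S_1(a+b+r_a)$ contributions will cancel and the residual $S_1(a+b)$ will fit into the sum as the $j=0$ term (since $i_0=0$ and $f_{a,0}=1$), proving the main formula. For the recursive description of $S(a,b+r_a)$, I will use that $S_1(a_i, c) = S_1(a_i) S_0(c) = S_1(a_i)$ at depth one (since $A_{0^+} = \{1\}$); hence any shuffle decomposition $\Delta(a,b) = \sum_{(f_i,a_i)\in S(a,b)} f_i\, S_1(a_i, a+b-a_i)$ will extend via the derived formula into one for $\Delta(a,b+r_a)$ whose new indices are exactly the elements of $T(a,b+r_a)$. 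The cardinality $|T(a,b+r_a)| = t_a$ will then follow from Proposition~\ref{faj_ne_0} together with the injectivity of $j \mapsto j + i_j p^m$ established earlier. The main obstacle will be spotting the vanishing of the coefficient of $S_1(b+qp^m)$; once that identity is recognised, the rest will be careful bookkeeping of exponents.
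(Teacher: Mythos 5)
Your proposal is correct --- I checked each step and the exponent bookkeeping is right throughout --- but it organizes the computation differently from the paper, even though both arguments pivot on the same key input, Proposition~\ref{prop249}. The paper works per $n_2$: it writes $\Delta(a,b+r_a)=\sum_{n_2}n_2^{-(b+r_a)}\left(S_1(a)-n_2^{-a}\right)$, sets $\Sigma=\sum_{j\ge 1}f_{a,j}S_1(b+r_a-\phi(l_j))$, and computes $\Delta(a,b+r_a)-\Sigma$ directly, recognizing $1-g_{n_2}/(S_1(a)n_2^a)=\left(1+[1]/n_2\right)^{p^m}=n_2^{r_a}$ so that the sum collapses at once to $S_1(a)S_1(b)$; then $\Delta(a,b)=S_1(a)S_1(b)-S_1(a+b)$ finishes, with $S_1(a+b)$ absorbed as the $j=0$ term exactly as in your assembly. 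You instead first reduce the difference to $-[1]^{p^m}\Delta(a,b+qp^m)$ via $n^{r_a}-1=[1]^{p^m}/n^{p^m}$ (the same underlying identity, in the form the paper uses as $(1+[1]/n)^{p^m}=n^{r_a}$), split off the diagonal term, and then must dispose of the unwanted $S_1(b+qp^m)$ contribution; your device for this --- summing the per-$n_1$ identity over all $q$ elements of $A_{1^+}$ so the constant left side dies because $q\equiv 0\bmod p$ --- is correct and is the genuinely new ingredient, a step the paper's per-$n_2$ grouping never needs because that term never arises there. Your route is somewhat longer and trades the paper's one-line telescoping for an explicit cancellation of $S_1(a+b+r_a)$ plus a vanishing lemma, but it isolates cleanly where characteristic $p$ enters ($q\equiv 0$ and $q-1\equiv -1$). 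One point where you are actually more careful than the paper: for $\lvert T(a,b+r_a)\rvert = t_a$ the paper cites only Proposition~\ref{faj_ne_0}, whereas you also invoke the injectivity of $j\mapsto j+i_jp^m$, which is genuinely needed to know the $t_a$ pairs have pairwise distinct second entries (it is the paper's earlier proposition, proved there but not re-cited in this proof).
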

\begin{proof}

By definition of $\Delta(a,b+r_a)$, it follows that
\begin{align*}
\Delta(a,b+r_a)  = \sum _{ \substack{ {n_1\ne n_2}\\ {n_1,n_2\in A_{1^+}}} }
\frac{1}{n_1^a
n_2^{b+r_a}}
 =\sum _{n_2\in A_{1^+}} \frac{1}{n_2^{b+r_a}} \left( S_1(a) -\frac{1}{n_2^a}
\right).
\end{align*}
By Proposition~\ref{prop249}, for any
$n\in A_{1^+}$, we have 
$
  g_n = 1 + \sum _{j=1}^{p^m-a} f_{a,j}n^{r_a-(j+i_jp^m)}. 
$
Let $\Sigma = \sum_{j=1}^{p^m-a} {f_{a,j}}S_1(b+r_a-\phi(l_j))$. Then,
\begin{align*}
 \Sigma   =\sum _{n_2\in A_{1^+}} \sum_{j=1}^{p^m-a}
\frac{f_{a,j}}{n_2^{b+r_a-(r_a-a +(j+i_jp^m))}} 
 &= \sum _{n_2\in A_{1^+}} \frac{1}{n_2^{b+r_a}} \frac{1}{n_2^{a}} \sum
_{j=1}^{p^m-a} f_{a,j} n_2^{r_a-(j+i_jp^m)}.
\end{align*}
Using this leads to
\begin{align*}
  \Delta(a,b+r_a) - \Sigma 
& = \sum _{n_2 \in A_{1^+}} \frac{S_1(a)}{n_2^{b+r_a}} 
\left( 1 -\frac{1}{S_1(a)n_2^a} \left( 1 +  \sum_{j=1}^{p^m-a} f_{a,j}
n_2^{r_a-(j+i_jp^m)}
\right) \right)\\
& =  \sum _{n_2 \in A_{1^+}} \frac{S_1(a)}{n_2^{b+r_a}} 
\left( 1 -\frac{1}{S_1(a)n_2^a} g_{n_2} \right)\\
& = \sum _{n_2 \in A_{1^+}} \frac{S_1(a)}{n_2^{b+r_a}} \left( 1
+\frac{[1]}{n_2} \right)^{p^m}\\
& = \sum _{n_2 \in A_{1^+}} \frac{S_1(a)}{n_2^{b+r_a}} \left( n_2^{q-1}
\right)^{p^m}\\ 
& = S_1(a)S_1(b).
\end{align*}

Therefore, $\Delta(a,b+r_a) -S_1(a)S_1(b) = \Sigma$. From this, we get
\begin{align*}
  \Delta(a,b+r_a) - \Delta(a,b) 
& = \Sigma + S_1(a+b)\\
& = \sum_{j=1}^{p^m-a} {f_{a,j}}S_1(b+r_a-\phi\left(l_j \right)
)) + S_1(b+r_a - \phi(0))\\
& = \sum_{j=0}^{p^m-a} {f_{a,j}}S_1(b+r_a-\phi\left(l_j \right)
).
\end{align*}
This shows that $T(a,b+r_a)$ is exactly as claimed.  By
Proposition~\ref{faj_ne_0} follows that  the size of $T(a,b+r_a)$ is precisely
$t_a$.
\end{proof}

\begin{remark}
\begin{enumerate}
  \item The set $T_a$ of pairs $\left( f_{a,j},  \phi(l_j)  \right)$
with $f_{a,j}\ne 0$  clearly is independent of $b$ as predicted in Conjecture
1.5 (1c) in \cite{Jalr10}, which did not have precise description for it as 
above. 

  \item The number of terms $t_a$ to be added at each step of the recursion 
depends on $q$ only through  $p$.
\end{enumerate}
\end{remark}

\begin{examples}[Special large indices, page 2338 \cite{Thakur_Multizeta08}]
Let $q = 2$.
\begin{enumerate}
  \item Let $a = 2^n-1$. Then, $m = n$ and $a-1 = 2 +\dotsb +
2^{n-1}$. There is only one zero in the base 2 expansion of $a-1$. Thus, $t_{a}
=
(2-0)^1=2$. At each step of recursion, two terms must be added.
Furthermore,
 $T_a = \set{(1,r_a-a), (1,r_a-a-1) }$ because in this case $l_j =j$ for
$j = 0,1$.
\item For $a = 2^n+1$,  $2^n$ top terms are added. Then, $m = n+1$. The base 2
expansion of $a-1$ has $n$ zeros and so $t_a = 2^n$. As before, for any $j$, $0
 \le j \le 2^n-1$, $i_j = 0$, and $l_j = j$. Thus, $T_a = \set{(1, r_a-a-j) \mid
0 \le j \le 2^n-1}$. 

\end{enumerate}
\end{examples}

\begin{example}[Example page 2337, \cite{Thakur_Multizeta08}]\label{ex2414}
  Let $q = 2$ and $a = 19$. Then, $m = 5$, $t_{19} = 8$, and $r_{19}=32$. In
this
case, $i_j = 0$, and $l_j = j$ for all $j$. The $j$'s for which $f_{19,j}\ne 0$
are 0, 1, 4, 5, 8, 9, 12, and 13. The polynomial $g_t$ is $g_t =1 +   t^{31} +
t^{28}
+ t^{27} + t^{24} + t^{23} + t^{20} + t^{19}$. 
Therefore,
\begin{align*}
  \Delta(19,b+32) - \Delta(19,b) & =  \sum_{j=0}^{13} f_{19,j} S_1(19+b+j) \\
 & =  S_1(b+19) + S_1(b+20) + S_1(b+23) + S_1(b+24) \\
 & \phantom{=}  +  S_1(b+27) + S_1(b+28) + S_1(b+31) + S_1(b+32).
\end{align*}
\end{example}

The following  corollary  proves parts 2, 5, and 6 of
the main conjecture in \cite{Jalr10}.

\begin{corollary}
Let notations be the same as before.
\begin{enumerate}
  \item [a)] $(1,\phi(0)) = (1,r_a-a) \in T_a$. 
\item [b)]
$T_a = \set{(1,\phi(0))}$ if and
only if $a =p^m$.
\item [c)] If $a'= p^{m'}a$, then
\begin{align*}
  T_{a'} = \set{ \left(f_{a,j}, p^{m'}\phi\left( l_j  \right) \right)
\mid 0 \le j \le p^m-a, f_{a,j}\ne 0 }.
\end{align*}

\item [d)] If $q$ is prime,
\begin{align*}
  T_{a} = \set{ \left(c_{a,l_j}, \phi\left( l_j  \right) \right)
\mid 0 \le j \le p^m-a, f_{a,j}\ne 0 }.
\end{align*}

\end{enumerate}

\end{corollary}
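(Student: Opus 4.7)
The plan is to derive each of (a)--(d) directly from the recipe $T_a=\{(f_{a,j},\phi(l_j)):f_{a,j}\ne 0\}$ established in Theorem~\ref{thm2412}, combined with Propositions~\ref{faj_ne_0} and \ref{prop247}. Parts (a), (b), and (d) should be short bookkeeping, while (c) carries the real content.

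For (a) I would take $j=0$: then $i_0=0$ (since $0\equiv 0\bmod(q-1)$), so $l_0=0$, $\phi(0)=r_a-a$, and $f_{a,0}=\binom{p^m-a}{0}=1$. For (b), Proposition~\ref{faj_ne_0} gives $|T_a|=t_a$, so $T_a=\{(1,\phi(0))\}$ is equivalent to $t_a=1$; the product formula $t_a=\prod_{j=0}^{p-2}(p-j)^{\mu_j}$ then forces every $\mu_j=0$ for $0\le j\le p-2$, i.e.\ every base-$p$ digit of $a-1$ equals $p-1$, so $a-1=p^k-1$ for some $k$. Minimality of $m$ in the definition of $r_a$ forces $k=m$, giving $a=p^m$; the converse is immediate since then the range $0\le j\le p^m-a$ collapses to $j=0$.

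For (c) I would first check $r_{a'}=p^{m'}r_a$ by arguing that $a\le p^m$ with $m$ minimal yields $p^{m+m'-1}<a'\le p^{m+m'}$, so the exponent $m''$ associated to $a'$ equals $m+m'$. The main step is identifying the indices $j'$ in $\{0,\ldots,p^{m'}(p^m-a)\}$ with $f_{a',j'}\ne 0$. By Lucas's theorem applied to $\binom{p^{m'}(p^m-a)}{j'}$, these are precisely $j'=p^{m'}j$ with $0\le j\le p^m-a$ and $\binom{p^m-a}{j}\not\equiv 0\pmod p$; since $(-1)^{p^{m'}j}=(-1)^j$ in $\F_p$, this gives $f_{a',p^{m'}j}=f_{a,j}$. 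For the second coordinate, the defining congruence $p^{m'}j+i_{j'}p^{m+m'}\equiv 0\pmod{q-1}$ reduces, via $\gcd(p^{m'},q-1)=1$ and the uniqueness of $i_{j'}$ in $\{0,\ldots,q-2\}$, to $i_{j'}=i_j$; hence $l_{j'}=p^{m'}l_j$ and $\phi_{a'}(l_{j'})=p^{m'}\phi(l_j)$, which is (c).

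For (d) I would invoke Proposition~\ref{prop247} directly: when $q$ is prime, $f_{a,j}=c_{a,l_j}$ for $1\le j\le p^m-a$, and $f_{a,0}=1=c_{a,0}$ together with $l_0=0$, so relabelling yields the asserted form of $T_a$. The main obstacle throughout is the bookkeeping in (c)---specifically, verifying that $j\mapsto p^{m'}j$ really sets up a bijection between the nonzero-$f$ index sets for $a$ and $a'$, and that the uniqueness characterizing $i_{j'}$ transfers correctly, so that $f$, $i$, $l$, and $\phi$ all scale compatibly.
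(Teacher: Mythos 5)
Your proposal is correct. For parts (a), (b) and (d) it is essentially the paper's own argument: (a) is exactly the observation $f_{a,0}=\binom{p^m-a}{0}(-1)^0=1$; (b) matches the paper's proof, which also deduces $a-1=\sum_{l=0}^{m-1}(p-1)p^l=p^m-1$ from $t_a=1$ for the hard direction (your detour through ``$a-1=p^k-1$ for some $k$, then minimality of $m$ gives $k=m$'' is the same computation, and your use of $\lvert T_a\rvert=t_a$ is licensed by Theorem~\ref{thm2412} and Proposition~\ref{faj_ne_0}); and (d) is the same appeal to Proposition~\ref{prop247}, with your explicit check of the $j=0$ case via $c_{a,0}=1$, $l_0=0$ being a small point the paper leaves implicit. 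Part (c), which you correctly identify as carrying the content, is where you take a genuinely different route. The paper never invokes Lucas' theorem here: it observes that the generating polynomial of Proposition~\ref{prop249} for $a'=p^{m'}a$ is the $p^{m'}$-th power of the one for $a$, since $S_1(p^{m'}a)=S_1(a)^{p^{m'}}$ and $[1]^{p^{m+m'}}=\bigl([1]^{p^m}\bigr)^{p^{m'}}$, so that $(g_n)^{p^{m'}}=1+\sum_j f_{a,j}^{p^{m'}}\,n^{p^{m'}(r_a-(j+i_jp^m))}$, and $f_{a,j}^{p^{m'}}=f_{a,j}$ in $\F_p$ gives the scaled exponents with unchanged coefficients in one stroke. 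You instead verify the scaling directly on the defining data: $r_{a'}=p^{m'}r_a$ from minimality of $m$ (your inequality $p^{m+m'-1}<a'\le p^{m+m'}$ is right, including the edge case $m=0$, $a=1$); Lucas' theorem to show the nonzero indices for $a'$ are precisely $j'=p^{m'}j$ with $f_{a',p^{m'}j}=f_{a,j}$, the sign surviving because $x\mapsto x^{p^{m'}}$ is the identity on $\F_p$; and invertibility of $p^{m'}$ modulo $q-1$ plus uniqueness of $i_{j'}$ in $\set{0,\dotsc,q-2}$ to get $i_{j'}=i_j$, hence $l_{j'}=p^{m'}l_j$ and $\phi_{a'}(l_{j'})=p^{m'}\phi(l_j)$. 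Both arguments are complete. The Frobenius argument is shorter and explains conceptually why the coefficients are preserved, with no digit analysis at all; your computation is more elementary, stays entirely at the level of Definitions~\ref{main_definitions}, \ref{lj} and \ref{faj}, and makes fully explicit the bijection $j\mapsto p^{m'}j$ between the nonzero-coefficient index sets --- a bookkeeping step the paper leaves to the reader when reading $T_{a'}$ off from $(g_n)^{p^{m'}}$.
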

\begin{proof}
To prove a), just note that $f_{a,0}=1$.

b) When $a = p^m$, then $g_n = 1$ and, thus, $T_a = \{(1,r_a-a) \}$.
Conversely, since $t_a = 1$, by definition
of $t_a$, it follows that $a-1 = \sum_{i=0}^{m-1}(p-1)p^i= p^m-1$.

c) Let $a' = p^{m'}a$ for some integer $m'\in \Z_+$. Then, $m+m'$ is the
smallest integer such that $a'\le p^{m+m'}$. We have

\begin{align*}
  - \frac{[1]^{p^{m+m'}}}{n^{p^{m+m'}-a'}} S_1(a')
& = \left( - \frac{[1]^{p^{m}}}{n^{p^{m}-a}} S_1(a) \right)^{p^{m'}} \\
& = \left( g_n \right)^{p^{m'}}\\
& = \left( 1 + \sum _{j=1}^{p^m-a} f_{a,j}n^{r_a- (j+i_jp^m)} \right)^{p^{m'}}
\\
& = 1 + \sum _{j=1} f_{a,j}^{p^{m'}}  n^{ {p^{m'}(r_a- (j+i_jp^m)) } } \\
& = 1 + \sum _{j=1} f_{a,j}  n^{ {p^{m'}(r_a- (j+i_jp^m)) } }.
\end{align*}

d)  If $q$ is prime, by Proposition~\ref{prop247}  $f_{a,j} =
c_{a,l_j}$ follows. 
\end{proof}

\begin{proposition}\label{prop516}
  If $\binom{r_a-a}{l(q-1)} \ne 0$ in $\F_p$ for some $l$, $0 \le l \le
j_{a,\max}$, then there exists $j$, $0 \le j \le p^m-a$, such that $l(q-1) = j +
i_j p^m$ and $\binom{p^m-a}{j} \ne 0$.
\end{proposition}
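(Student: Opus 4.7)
The plan is to apply Lucas' theorem after writing the relevant quantities in base $p$. Set $N := l(q-1)$. Since $l \le j_{a,\max}$, we have $N \le r_a - a < (q-1)p^m$, so division with remainder by $p^m$ yields a unique decomposition $N = j + ip^m$ with $0 \le j < p^m$ and $0 \le i \le q-2$. This $j$ will be the one we want: because $j + ip^m = N \equiv 0 \bmod (q-1)$ and $0 \le i \le q-2$, the uniqueness clause of Definition~\ref{lj} will force $i = i_j$, once we verify $j \le p^m - a$ (the range in which $i_j$ is defined). Both this bound and the nonvanishing of $\binom{p^m-a}{j}$ will come from the hypothesis via Lucas' theorem.

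The key computation is the base-$p$ expansion of $r_a - a$. Extending the calculation in the proofs of Propositions~\ref{prop245}(c) and \ref{prop247} from $q=p$ to $q=p^s$, we have $r_a - 1 = p^{m+s} - p^m - 1$, and hence, if $a - 1 = \sum_{l=0}^{m-1} a_l p^l$ is the base-$p$ expansion,
\begin{align*}
r_a - a = \sum_{l=0}^{m-1}(p-1-a_l)p^l + (p-2)p^m + \sum_{l=m+1}^{m+s-1}(p-1)p^l.
\end{align*}
Writing $j = \sum_{l=0}^{m-1} b_l p^l$ and $i = \sum_{l=0}^{s-1} i_l p^l$ in base $p$, the base-$p$ digits of $N = j + ip^m$ are $b_0,\dotsc,b_{m-1},i_0,\dotsc,i_{s-1}$.

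By Lucas' theorem, the hypothesis $\binom{r_a-a}{N} \not\equiv 0 \bmod p$ is equivalent to the digit-wise inequalities $b_l \le p-1-a_l$ for $0 \le l \le m-1$, together with $i_0 \le p-2$ and $i_l \le p-1$ for $1 \le l \le s-1$ (the last being automatic). Summing the first family yields $j \le \sum_{l=0}^{m-1}(p-1-a_l)p^l = p^m - a$. Moreover, since the base-$p$ expansion of $p^m - a$ is exactly $\sum_{l=0}^{m-1}(p-1-a_l)p^l$, as recorded in the proof of Proposition~\ref{prop245}(c), the same inequalities give $\binom{p^m-a}{j}\ne 0$ in $\F_p$ by a second application of Lucas. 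Combined with the observation $i = i_j$ from the first paragraph, this produces the desired $j$.

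I expect no substantive obstacle: once the base-$p$ expansion of $r_a - a$ is correctly in hand, the argument is a direct double application of Lucas' theorem, and the $q = p^s$ generalization of the expansions from Propositions~\ref{prop245} and \ref{prop247} is the only new bookkeeping.
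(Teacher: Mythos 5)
Your proposal is correct and takes essentially the same route as the paper's own proof: both rest on the base-$p$ expansion $r_a-a=\sum_{k=0}^{m-1}(p-1-a_k)p^k+(p-2)p^m+\sum_{k=m+1}^{m+s-1}(p-1)p^k$, split the base-$p$ digits of $l(q-1)$ into the low part $j$ and the high part $i$, apply Lucas' theorem to get the digit-wise bounds (hence $j\le p^m-a$ and $\binom{p^m-a}{j}\ne 0$ in $\F_p$), and identify $i=i_j$ from $j+ip^m\equiv 0 \bmod (q-1)$ with $0\le i\le q-2$. The only cosmetic difference is that you define $j$ and $i$ first by division with remainder by $p^m$ and then match digits, whereas the paper reads $j$ and $i$ off the expansion of $l(q-1)$ directly.
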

\begin{proof}
  Writing the base $p$ expansion of $a-1$ as $\sum a_kp^k$ as before, we have 
$$r_a -a 
= p^m-a +p^m(q-2)
= \sum _{k=0}^{m-1}(p-1-a_k)p^k  + (p-2)p^m  + \sum _{k=m+1}^{m+s-1}(p-1)p^k. $$
Let $l(q-1) = \sum _{k=0}^{m+s-1} b_k p^k$ be the base $p$ expansion of
$l(q-1)$.
Since $\binom{r_a-a}{l(q-1)} \ne 0$, by Lucas theorem, we have
$\binom{p-1-a_k}{b_k} \ne 0$ for  $k=0,1,\dotsc, m-1$  and $\binom{p-2}{b_m} \ne
0$.  Therefore, $b_k \le p-1-a_k$ for $k = 0,1,\dotsc, m-1$ and $b_m \le p-2$.
Let $j = \sum _{k=0}^{m-1} b_kp^k$ and $i = \sum _{k=0}^{s-1}b_{m+k} p^{k}$.
Thus, $0 \le j \le p^m-a$ and $0 \le i \le q-2$. Since $j + ip^m = l(q-1) \equiv
0 \bmod (q-1)$, we have $i = i_j$. It follows that $\binom{p^m-a}{j} \ne 0$.
\end{proof}

\begin{remark}\label{rem517}
  \begin{enumerate}
    \item This proposition proves  Part 3 of the
main conjecture in \cite{Jalr10}: If there is no carry over
base $p$ in the sum of $l(q-1)$ and $\phi(l)$, then $\binom{r_a-a}{l(q-1)} \ne
0$ and, so, by  Proposition~\ref{prop516}  and Theorem~\ref{thm2412}, $(f_{a,j},
\phi(l_j))$ belongs to $T_a$.

\item By Proposition~\ref{prop247}, if $q$ is prime, $f_{a,j}\ne 0$ if and only
if $\binom{r_a-a}{l_j(q-1)}\ne 0$. If $q$ is not prime, we could have 
$\binom{p^m-a}{j} \ne 0$ and $\binom{r_a-a}{l_j(q-1)} = 0$ (eg., $q=4$, $a=5$,
$j=1$).
  \end{enumerate}
\end{remark}

\begin{example}[Theorems 3 and 7,\cite{Thakur_Multizeta08}]
We give another proof of Theorems 3 and 7 in \cite{Thakur_Multizeta08} using
Theorem~\ref{thm2412}. Let $q = 2$.
\begin{enumerate}
  \item [a)] Let $a = 1$. Then $m = 0$, $r_1 = 1$, $g_t = 1$, and
$t_1 = 1$. By Theorem~\ref{thm2412}, we have $\Delta(1,b+1)- \Delta(1,b) =
S_1(1+b)$. Then, $\Delta(1,2) = \Delta(1,1) + S_1(2)=S_1(2)$. By repeating this
process, we get $\Delta(1,3) = \Delta(1,2) + S_1(3) = S_1(2) + S_1(3)$. It
follows
that $\Delta(1,b) = \sum _{i=2}^b S_1(i)$. Therefore, we get
\begin{align*}
  \zeta(1)\zeta(b) =\zeta(1+b) + \sum _{i=1}^{b-1} \zeta(i,b+1-i).
\end{align*}

\item [b)] Now let  $a = 2$. Then $r_2 = 2$, $g_t = 1$, and $t_2=1$. Since
$\Delta(2,1) = S_1(2)$ and $\Delta(2,2)=0$, proceeding as in part a), it
follows that 
\begin{align*}
  \Delta(2,b) = 
\begin{cases}
 \sum _{i=1}^{(b-1)/2} S_1(2i+1) + S_1(2) & \mbox{if $b$ is odd,} \\
 \sum _{i=2}^{b/2} S_1(2i)  &\mbox{if $b$ is even.}
\end{cases}
\end{align*}

\end{enumerate}
\end{example}

\section{Closed formulas }

Let $a,b\in \Z_+$. By Theorem~\ref{shuffle_thm1}, there are $f_i \in \F_p$ and
$a_i\in \Z_+$ so that $\Delta(a,b) =\sum _{i=0}^{\delta} f_i S_1(a_i)$. From
this, we get the partial fraction decomposition of $\Delta(a,b)$:
\begin{align}
  \Delta(a,b) = \sum _{\mu \in  \F_q} \frac{h_\mu(t)}{(t-\mu)^{n}},
\label{pfd1}
\end{align}
where $h_\mu(t) = f_0 + f_1(t-\mu)^{a_0-a_1} +
\dotsb + f_{a_\delta}(t-\mu)^{a_0-a_\delta}$ (we assume that $n= a_0 > a_1
>\dotsb > a_\delta$). Uniqueness of the partial
fraction decomposition   guarantees uniqueness of $f_i$'s.

Conversely, since the denominator of  $\Delta(a,b)$ is a power of $[1]$, its
partial fraction decomposition is of the form \eqref{pfd1}, where $h_\mu(t)\in
\F_q[t]$ is  relatively prime to 
$t-\mu$; also, $\deg h_\mu <n$.
Now, $\Delta(a,b)$ is invariant with respect to the automorphisms $t
\rightarrow t+\theta$, $\theta \in \F_q$ of $A$. By the uniqueness of the
partial fraction decomposition, we have $h_{0}(t) =h_{\theta}(t+\theta)$ for
any $\theta \in \F_q$. Let $h_0(t) = f_{0} + f_{1} t + \dotsb +
f_{n-1}t^{n-1}$. Then, 
\begin{align*}
  \Delta(a,b) 
 = \sum _{\mu \in \F_q} \frac{h_0(t-\mu)}{(t-\mu)^n} 
 = \sum _{i=0}^{n-1} f_i\sum _{\mu \in \F_q} \frac{1}{(t-\mu)^{i}}
 = \sum _{i=0}^{n-1} f_i S_1(n-i).
\end{align*}

By the uniqueness of $f_i$'s and by Theorem~\ref{shuffle_thm1}, it follows that
$f_i \in \F_p$.

We proceed as follows to find  $h_0(t)$. Since ${[1]^n}/{t^n}$ is a unit modulo 
$t^n$, it follows from  \eqref{pfd1} that modulo $t^n$, we have 
$[1]^n \Delta(a,b)= \frac{[1]^n}{t^n} h_0(t)$, so that
 $ h_0(t) \bmod t^n  = \left( [1]^n \Delta(a,b) \bmod t^n
\right)\left(\frac{[1]^n}{t^n}  \bmod t^n \right)^{-1}$. To finish, we pick the
unique representative of $h_0(t) \bmod t^n$ of degree less than $n$.

We first explain the case $q = 2$, when we can simplify a lot to get a nice 
expression, before giving the general case.  When $a = b$, we know that $\Delta(a,b)=0$ 
\cite[Theorem 8]{Thakur_PowerSums08}. Since  $S(a,b) = S(b,a)$,
 we can assume, without loss of generality, that $a > b$.

\begin{theorem}\label{initial_values_q2}
  Let $q = 2$. If $a >b \ge 1$, then
\begin{align*}
  \Delta(a,b)  = \sum _{k=0}^{a-1} f_{k} S_1(a-k) \mbox{ where }
f_{k} = \sum _{\substack{ i+j = k \\ i \le 2^m-a \\ j \le a-b-1 }}
\binom{2^m-a}{i} \binom{a-b}{j}.
\end{align*}
In particular, 
\begin{align*}
  S(a,b) & = \set{ (f_{k},a-k) \mid f_k \ne 0,\, 0\le k \le a-1 }.
\end{align*}
\end{theorem}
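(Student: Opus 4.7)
The plan is to apply the general partial-fraction recipe developed in the paragraphs immediately preceding the theorem: write $\Delta(a,b) = \sum_{\mu \in \F_q} h_\mu(t)/(t-\mu)^n$, use Galois invariance to reduce to computing the single polynomial $h_0(t)$ of degree less than $n$, and then read off the $f_k$ from the coefficients of $h_0$ so that $\Delta(a,b) = \sum_k f_k S_1(n-k)$. For $q=2$ this can be carried out very explicitly because $A_{1^+} = \{t, t+1\}$ has only two elements, and because the Frobenius identity $(t+1)^{2^m} = t^{2^m} + 1$ together with $[1] = t(t+1)$ is available in characteristic two.

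Concretely, I would start by rewriting
\begin{align*}
\Delta(a,b) = \frac{1}{t^a (t+1)^b} + \frac{1}{t^b (t+1)^a},
\end{align*}
and noting that this has poles of order exactly $a$ at $t=0$ and at $t=1$ (since $a>b$), so $n = a$. Multiplying by $[1]^a = t^a(t+1)^a$ collapses the right-hand side to $(t+1)^{a-b} + t^{a-b}$, in which the two top-degree monomials cancel in characteristic two, leaving $\sum_{j=0}^{a-b-1} \binom{a-b}{j} t^j$. Next I would invert the factor $[1]^a/t^a = (t+1)^a$ modulo $t^a$: because $m$ is minimal with $a \leq 2^m$ we have $t^{2^m} \equiv 0 \pmod{t^a}$, so $(t+1)^{2^m} \equiv 1 \pmod{t^a}$ and therefore $(t+1)^{-a} \equiv (t+1)^{2^m - a} \pmod{t^a}$. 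Combining these two computations identifies $h_0(t) \bmod t^a$ as the product of $(t+1)^{2^m-a}$ and $\sum_{j=0}^{a-b-1} \binom{a-b}{j} t^j$, and the coefficient of $t^k$ in this product is exactly the $f_k$ in the statement.

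I do not expect a serious obstacle here. The only subtle point is that the product $(t+1)^{2^m-a}\sum_j \binom{a-b}{j} t^j$ can have terms of degree $\geq a$; these are silently discarded when taking $h_0$ modulo $t^a$, and this is already built into the statement by restricting the sum to $0 \leq k \leq a-1$. The description of $S(a,b)$ is immediate from $\Delta(a,b) = \sum_k f_k S_1(a-k)$ together with the uniqueness coming from Theorem~\ref{shuffle_thm1}. The argument leans essentially on the characteristic-two identities, which is presumably why the analogous closed-form result for general $q$ (Theorem~\ref{closed_formula_for_general_q}) needs a separate treatment later in the paper.
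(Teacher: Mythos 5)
Your argument is correct and is essentially the paper's own proof: the paper likewise combines the two terms into $\bigl((t+1)^{a-b}+t^{a-b}\bigr)/[1]^a = S_1(b-a)/[1]^a$, inverts $(t+1)^a$ modulo $t^a$ as $(t+1)^{2^m-a}$ via the Frobenius identity, and takes $h_0(t)$ to be the remainder of $(t+1)^{2^m-a}\sum_{j=0}^{a-b-1}\binom{a-b}{j}t^j$ upon division by $t^a$, reading off the $f_k$ as the convolution coefficients. The only (immaterial) discrepancy is your attribution of the uniqueness of the $f_k$ to Theorem~\ref{shuffle_thm1}, whereas it comes from the uniqueness of the partial fraction decomposition explained in the paragraphs preceding the theorem.
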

\begin{proof}

Since $a>b$,
\begin{align*}
  \Delta(a,b) = \frac{1}{t^a (t+1)^b} + \frac{1}{(t+1)^a t^b} 
 = \frac{(t+1)^{a-b} + t^{a-b}}{t^a(t+1)^a} 
 = \frac{S_1(b-a)}{[1]^a}.
\end{align*} 
The degree of the numerator of $\Delta(a,b)$ is less than $a-b$ and, thus, less
than the degree of the denominator. Next, we apply the method explained above to
write   $\Delta(a,b)$ as a sum $\frac{h_0(t)}{t^a} +
\frac{h_0(t-1)}{(t-1)^a}$. Since $(t+1)^{2^m-a} (t+1)^a \equiv 1 \bmod
t^a$ we have that $(t+1)^{2^m-a} \bmod t^a$ is the inverse of $(t+1)^a \bmod
t^a$.  Then, $h_0(t) \bmod t^a =  (t+1)^{2^m-a} S_1(b-a) \bmod t^a$. The only
representative of degree less than $a$ of $(t+1)^{2^m-a} \bmod
t^a$ is  $(t+1)^{2^m-a}$ because $a>2^{m-1}$. The only representative of
degree less than $a$ of $S_1(b-a)\bmod t^a$ is $S_1(b-a)$. Now,
\begin{align*}
  (t+1)^{2^m-a} & = \sum _{i=0}^{2^m-a} \binom{2^m-a}{i}t^i,\\
S_1(b-a) & = t^{a-b} + (t+1)^{a-b} = \sum _{j = 0}^{a-b-1} \binom{a-b}{j}  t^j.
\end{align*}

The degree of $(t+1)^{2^m-a} S_1(b-a)$ is at
most $2^m-b-1$. By  taking as $h_0(t)$ the remainder of $(t+1)^{2^m-a}S_1(b-a)$
when divided by $t^a$ the theorem follows.
\end{proof}

\begin{remark}
\begin{enumerate}

\item If $b \ge 2^m-a$, then $2^m-b-1<a$ and, therefore, it is  not necessary to
divide by $t^a$.

  \item If $k = 0$, then $ i = j = 0$ and $f_0 = \binom{2^m-a}{0}
\binom{a-b}{0}=1$.
\end{enumerate}
\end{remark}

Now we make the recipe in Theorem~\ref{shuffle_thm1}
explicit by giving a closed formula for $f_i$ there.

\begin{theorem}\label{closed_formula_for_general_q}
  Let $q$ be arbitrary. 
Let $a,b\in \Z_+$ such that $a \ge b \ge 1$; let $m$ the
smallest integer such that $a  \le p^m$. 
Then
\begin{align}
\Delta(a,b) = \sum _{i=0}^{a-1} f_i S_1(a-i),\label{delta_as_linear_combination}
\end{align}
where $f(t):= f_0 + \dotsb + f_{a-1}t^{a-1}\in \F_p[t]$ is given by
\begin{align*}
- \left( t^{q-1}-1 \right)^{p^m-a}
\sum _{\theta \in \F_q}
\sum _{\mu \in \F_q^*}
\left(
\sum_{j=1}^{q-1}  \mu^{q-1-j} (t+\theta)^{j-1}
\right)^a
(t+\theta-\mu)^{a-b} \bmod t^a.
\end{align*}

Equivalently, $f(t)$ is given by
\begin{multline}
\sum _{i_3=0}^{p^m-a}
\sum _{k}
\sum _{i_1=0}^{a-b}
\sum _{i_2=0}^{\sigma(k)+i_1-1}
\binom{p^m-a}{i_3}
\binom{a}{k_1,\dotsc,k_{q-1}}
\binom{a-b}{i_1} \times \\
\binom{\sigma(k) +i_1}{i_2} 
(-1)^{b+i_1+i_3}
t^{i_2+i_3(q-1)},\label{gf_extended}
\end{multline}
where the second sum extends over all $(q-1)$-tuples $k = (k_1,\dotsc,k_{q-1})$
of non-negative integers such that $k_1 +\dotsb +k_{q-1}=a$; $\sigma(k):=\sum_{j=2}^{q-1}
(j-1)k_j$; $\tau(k):= \sum_{j=1}^{q-2} (q-1-j)k_j$; and $i_1, i_2, i_3$ are
subjected to  
$\sigma(k)+i_1-i_2$,  $\tau(k)+a-b-i_1$ being both `even', and
$i_2+i_3(q-1)<a$.
\end{theorem}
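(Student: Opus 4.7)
The plan is to exploit the partial fraction machinery set up just before the theorem statement: $\Delta(a,b) = \sum_{\mu \in \F_q} h_0(t-\mu)/(t-\mu)^a$ with $h_0 \in \F_q[t]$ of degree less than $a$, and $h_0(t) \bmod t^a$ equals $[1]^a\Delta(a,b)$ times the inverse of $[1]^a/t^a$ modulo $t^a$. The central arithmetic observation I would use is that in $\F_q[t]/(t^a)$ one has $(t^{q-1}-1)^{p^m} = t^{r_a}-1 \equiv -1 \pmod{t^a}$, since $r_a = (q-1)p^m \ge p^m \ge a$. Because $[1]^a/t^a = (t^{q-1}-1)^a$, this immediately furnishes the inverse $-(t^{q-1}-1)^{p^m-a}$, so that
\[
f(t) = h_0(t) \equiv -(t^{q-1}-1)^{p^m-a} \cdot [1]^a\,\Delta(a,b) \pmod{t^a}.
\]

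Next I would rewrite $[1]^a \Delta(a,b)$ in the stated form. Starting from $\Delta(a,b) = \sum_{\alpha \ne \beta \in \F_q}(t-\alpha)^{-a}(t-\beta)^{-b}$ and the identity $(t-\alpha)^{q-1}-1 = \prod_{\gamma \ne \alpha}(t-\gamma) = [1]/(t-\alpha)$, one obtains $[1]^a/(t-\alpha)^a = ((t-\alpha)^{q-1}-1)^a$, so each summand becomes $((t-\alpha)^{q-1}-1)^a/(t-\beta)^b$. Reparameterizing the ordered pairs with $\alpha \ne \beta$ via $(\alpha,\beta) = (-\theta,\mu-\theta)$ for $\theta \in \F_q$ and $\mu \in \F_q^*$, and applying the geometric-series identity
\[
\frac{(t+\theta)^{q-1}-1}{(t+\theta)-\mu} = \sum_{j=1}^{q-1}\mu^{q-1-j}(t+\theta)^{j-1}
\qquad(\text{valid since }\mu^{q-1}=1),
\]
together with the factor $(t+\theta-\mu)^{a-b}$ split off from the numerator, yields the first closed-form expression for $f(t)$.

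To obtain the explicit second formula \eqref{gf_extended} I would then expand everything in sight. The multinomial theorem applied to $\left(\sum_j \mu^{q-1-j}(t+\theta)^{j-1}\right)^a$ introduces the tuple $k$ and the quantities $\sigma(k), \tau(k)$; the binomial expansion $(t+\theta-\mu)^{a-b} = \sum_{i_1}\binom{a-b}{i_1}(t+\theta)^{i_1}(-\mu)^{a-b-i_1}$ introduces $i_1$; the expansion $(t+\theta)^{\sigma(k)+i_1} = \sum_{i_2}\binom{\sigma(k)+i_1}{i_2}t^{i_2}\theta^{\sigma(k)+i_1-i_2}$ introduces $i_2$; and $(t^{q-1}-1)^{p^m-a} = \sum_{i_3}\binom{p^m-a}{i_3}(-1)^{p^m-a-i_3}t^{i_3(q-1)}$ introduces $i_3$. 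Evaluating the inner $\F_q$-sums via the standard identities $\sum_{\mu \in \F_q^*}\mu^l = -1$ when $(q-1)\mid l$ (and $0$ otherwise) and $\sum_{\theta \in \F_q}\theta^l = -1$ when $(q-1)\mid l$ with $l>0$ (and $0$ otherwise) forces the two `even' divisibility constraints $(q-1) \mid \tau(k)+a-b-i_1$ and $(q-1)\mid \sigma(k)+i_1-i_2$. The positivity requirement on the $\theta$-sum produces the range bound $0 \le i_2 \le \sigma(k)+i_1-1$, and each of the two $\F_q$-sums contributes a factor of $-1$ that combines with the leading minus sign, the $(-1)^{a-b-i_1}$, and the $(-1)^{p^m-a-i_3}$ to give $(-1)^{b+i_1+i_3}$ (a short parity check handles $p=2$ and odd $p$ uniformly). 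Truncation modulo $t^a$ imposes $i_2+i_3(q-1)<a$.

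The main obstacle will be the bookkeeping in the last step: juggling the four indexing variables, matching the stated range for $i_2$ with the positivity requirement of the $\theta$-sum, and verifying the sign normalization. Conceptually the proof is driven entirely by the inversion of $(t^{q-1}-1)^a$ modulo $t^a$ and the factorization $(t-\mu)^{q-1}-1 = [1]/(t-\mu)$, both of which are elementary consequences of $\F_q$-arithmetic.
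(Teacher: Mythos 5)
Your proposal is correct and follows essentially the same route as the paper: the partial-fraction method with $h_0(t)\equiv -(t^{q-1}-1)^{p^m-a}\,[1]^a\Delta(a,b)\pmod{t^a}$ (the inversion coming from $(t^{q-1}-1)^{p^m}=t^{r_a}-1\equiv-1\bmod t^a$), followed by multinomial/binomial expansion and evaluation of the character sums $\sum_\mu \mu^\ell$ and $\sum_\theta \theta^\ell$ to force the two `even' conditions, the bound $i_2\le\sigma(k)+i_1-1$, and the sign $(-1)^{b+i_1+i_3}$. The only cosmetic difference is that you reach the inner sum $\sum_{j=1}^{q-1}\mu^{q-1-j}(t+\theta)^{j-1}$ via the geometric-series factorization of $\bigl((t+\theta)^{q-1}-\mu^{q-1}\bigr)/(t+\theta-\mu)$, whereas the paper expands $\bigl((t+\theta-\mu)^{q-1}-1\bigr)/(t+\theta)$ using $\binom{q-1}{j}=(-1)^j$; these are the same identity, and your write-up in fact supplies details (the inversion argument, the positivity constraint from the $\theta$-sum, the parity check) that the paper's sketch omits.
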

\begin{proof}
The proof method is the same as the one of Theorem~\ref{initial_values_q2}, but
with more combinatorial complications as we deal with any $q$. We now sketch the steps, 
omitting the routine calculations. First,  the inverse modulo $t^a$ of
$[1]^a/t^a$ is 
$-(t^{q-1}-1)^{p^m-a}$ responsible for the first binomial coefficient. Let $\theta, \mu
\in\F_q$, with $\mu\neq 0$. Then raising
\begin{align*}
\frac{[1]}{(t+\theta)(t+\theta-\mu)}
& = 
\frac{\left(t+\theta-\mu\right)^{q-1}-1}{t+\theta}
= 
\sum _{j=1}^{q-1} \binom{q-1}{j} (t+\theta)^{j-1} (-\mu)^{q-1-j}\\
& = \sum_{j=1}^{q-1}\mu^{q-1-j}(t+\theta)^{j-1},
\end{align*}
to the $a$-th power by the multinomial theorem
brings 
in $\tau(k), \sigma(k)$, the multinomial coefficient; whereas the  multiplication by 
$(t+\theta-\mu)^{a-b}$ the next binomial coefficient, and also $(t+\theta)^{\sigma(k)+i_1}$ 
bringing in the last binomial coefficient. Finally,  we sum over $\theta$ and $\mu$ and use the fact that 
$\sum_{\xi\in\F_q^*}\xi^\ell$ is $-1$ or $0$ according as $\ell$ is `even' or
not, accounting for the conditions. 
\end{proof}

\begin{corollary}\label{parity_restriction}
The $i$'s in equation~\eqref{delta_as_linear_combination} are such
that $b+i$ is `even'.
\end{corollary}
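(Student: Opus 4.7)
The plan is to read the congruence off directly from the explicit expression \eqref{gf_extended} for $f(t)$. A monomial $t^i$ appearing in $f(t)$ comes from a term with $i = i_2 + i_3(q-1)$, so $i \equiv i_2 \pmod{q-1}$, and it suffices to show that $b + i_2 \equiv 0 \pmod{q-1}$ whenever the summand is nonzero.

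The two `even' side conditions built into the summation are
$\sigma(k) + i_1 - i_2 \equiv 0$ and $\tau(k) + a - b - i_1 \equiv 0$
modulo $q-1$. Adding them eliminates $i_1$ and gives
$\sigma(k) + \tau(k) + a - b - i_2 \equiv 0 \pmod{q-1}$.
The only real calculation is the identity $\sigma(k) + \tau(k) = (q-2)a$, which I would verify by collecting the coefficient of each $k_j$: for $j=1$ only $\tau$ contributes, giving $q-2$; for $j=q-1$ only $\sigma$ contributes, also giving $q-2$; and for $2 \le j \le q-2$ the two contributions add as $(j-1) + (q-1-j) = q-2$. Using $k_1 + \dotsb + k_{q-1} = a$ then yields $\sigma(k) + \tau(k) = (q-2) a \equiv -a \pmod{q-1}$, so the combined congruence collapses to $b + i_2 \equiv 0 \pmod{q-1}$, and hence $b + i \equiv 0 \pmod{q-1}$.

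I do not anticipate any genuine obstacle; the corollary is essentially bookkeeping on the two `even' congruences already encoded in \eqref{gf_extended}. The one minor point worth noting is the degenerate case $q=2$, where $\sigma$ and $\tau$ are both empty sums: the identity $\sigma(k) + \tau(k) = (q-2)a$ still holds since both sides vanish, and the argument goes through unchanged.
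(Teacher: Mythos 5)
Your proof is correct and follows essentially the same route as the paper: both add the two `even' congruences from \eqref{gf_extended} to eliminate $i_1$, use the identity $\sigma(k)+\tau(k)+a = a(q-1)$ (your $\sigma(k)+\tau(k)=(q-2)a$ in equivalent form) to conclude $b+i_2$ is `even', and then pass to $b+i = b+i_2+i_3(q-1)$. Your explicit coefficient-by-coefficient check of the identity and the remark on the degenerate case $q=2$ are fine but add nothing beyond the paper's argument.
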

\begin{proof}
  Each $i$ is of the form $i_2+i_3(q-1)$. Since both $\sum
_{j=1}^{q-1} (j-1)k_j+i_1-i_2$ and $\sum _{j=1}^{q-1} (q-1-j)k_j +a-b-i_1$ are
`even', and $\sum _{j=1}^{q-1} (j-1)k_j + \sum _{j=1}^{q-1} (q-1-j)k_j + a =
a(q-1)$, it follows that $ a(q-1)-(b+i_2) $ is `even'. Then,
$b+i_2$ is `even', too.  Therefore, $b+ i=b+i_2 + i_3(q-1)$ is
`even'.
\end{proof}

\begin{remark}
\begin{enumerate}
  \item  Corollary~\ref{parity_restriction} proves the
parity conjecture of Thakur \cite[5.3]{Thakur_Multizeta08}. We have already
proved the parity conjecture by a different method \cite{Jalr11b}.

\item  When $q=2$, in equation~\eqref{gf_extended} instead of fours sums
and four
multinomial coefficients, we have 
two sums and two binomial coefficients  and we reduce to formula for $q=2$.

\item By Lucas theorem the multinomial coefficient or the binomial coefficients
in \eqref{gf_extended} are zero if there is carry over base $p$ in the
corresponding sum. So only terms where there is no carry over base $p$ (in the
corresponding sums) need to be considered. Similarly, the other conditions
and vanishings of binomial coefficients reduce the number of terms in the sum a lot. 
\end{enumerate}
\end{remark}

\begin{example}[Both indices large, page 2338, \cite{Thakur_Multizeta08}]
  We  use Theorem~\ref{initial_values_q2} to prove two conjectures
due to Thakur. Let $q = 2$.
\begin{enumerate}
  \item [a)] Let $a = 2^{n}+1$ and $b = 2^{n}-1$. Then, 
\begin{align*}
  \Delta_d(a,b) & = \sum _{k=2}^{2^{n}+1} S_d(k,2^{n}+1-k).
  \end{align*}

\item [b)] Let $a = 2^n+1$ and $b = 2^{n-1}$. Then,
\begin{align*}
  \Delta_d(a,b) = S_d(a,b) + \sum _{i=k}^{2^{n-1}+1} S_d(k,
3\cdot 2^{n-1}+1-k).
\end{align*}
\end{enumerate}
\begin{proof}
It is enough to prove for the case $d = 1$ as it is established in
Theorem~\ref{shuffle_thm1}.

  a) In this case, $m = n+1$, $p^m-a = 2^n-1$, and $a-b-1 =
1$. Then, $f_k = 1$ for $k = 1,\dotsc, 2^n-1$ and $f_{a-1}=0$ for $k = a-1=2^n$.
By Theorem~\ref{initial_values_q2},
\begin{align*}
  \Delta(2^{n}+1,2^{n}-1) & = \sum _{k=0}^{2^{n}-1} S_1(2^{n}+1-k).
\end{align*}

b) Now, $m = n+1$, $p^m-a = 2^n-1$,
and $a-b = 2^{n-1}+1$. Looking at the base 2 expansions of $j$ and $2^{n-1}+1$
and using  Lucas theorem, we see that the values of $j$, $0\le j
\le 2^{n-1}$, such that $\binom{2^{n-1}+1}{j} \ne 0$ are $j = 0,1,2^{n-1}$. On
the other hand, $\binom{2^n-1}{i} \ne 0$ for $i = 0,1, \dotsc, 2^n-1$. We
already know that $f_0 = 1$. For $1\le k \le 2^{n}-1$, we have
\begin{align*}
  f_k = \binom{2^n-1}{k}\binom{2^{n-1}+1}{0}
+\binom{2^n-1}{k-1}\binom{2^{n-1}+1}{1} + \binom{2^n-1}{k-2^{n-1}}
\binom{2^{n-1}+1}{2^{n-1}}.
\end{align*}
Therefore,  $f_k =0$ for $1 \le k <2^{n-1}$ and $f_k = 1$ for $2^{n-1} \le k \le
2^{n}-1$. For $k = a-1 = 2^n$,
\begin{align*}
  f_k = \binom{2^n-1}{k-1}\binom{2^{n-1}+1}{1} + \binom{2^n-1}{k-2^{n-1}}
\binom{2^{n-1}+1}{2^{n-1}} = 0.
\end{align*}
It follows that $S(a,b)  = \set{(f_0,a-0)} \cup \set{ (f_{k},a-k) \mid
2^{n-1} \le k \le 2^n-1 }$. Then
\begin{align*}
  \Delta(a,b)& = S_1(2^n+1) + \sum _{k = 2^{n-1}}^{2^{n}-1}S_1(2^n+1-k).
\end{align*}
\end{proof}
\end{example}

The following corollary gives special cases of the
Theorem~\ref{initial_values_q2}.

\begin{corollary}\label{cor2421}
Let $q=2$.
\begin{enumerate}
  \item [a)] If $a = 2^m$ and $b<2^m$, then 
$$\Delta(2^m,b) = \sum _{k = 0}^{2^m-b-1}
\binom{2^m-b}{k} S_1(a-k).$$
$\binom{2^m-b}{k} \ne 0$ if there is no carry over base 2 in the sum of $k$ and
$b-1$.
\item [b)] If $b = a-1$, then
\begin{align*}
  \Delta(a,a-1)  = \sum _{k = 0}^{2^m-a} \binom{2^m-a}{k} S_1(a-k).
\end{align*}
The number of nonzero coefficients is $t_a$.

\end{enumerate}

\end{corollary}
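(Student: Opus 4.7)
The plan is to derive both parts as direct specializations of Theorem~\ref{initial_values_q2}, and then interpret the non-vanishing of the binomial coefficients via Lucas' theorem.

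For part (a), I would substitute $a = 2^m$ into the double-sum formula for $f_k$ from Theorem~\ref{initial_values_q2}. The constraint $i \le 2^m - a = 0$ forces $i = 0$, so only the term $i=0,\ j=k$ survives, giving $f_k = \binom{2^m-b}{k}$ when $0 \le k \le 2^m - b - 1$ and $f_k = 0$ otherwise; this is exactly the claimed formula. For the no-carry characterization, I would write $2^m - b = (2^m - 1) - (b-1)$ and note that $2^m - 1$ has base-$2$ expansion consisting of $m$ ones, so the base-$2$ digits of $2^m - b$ are the bitwise complements (within the length-$m$ window) of the digits of $b-1$. By Lucas' theorem, $\binom{2^m-b}{k} \not\equiv 0 \pmod 2$ iff every base-$2$ digit of $k$ is at most the corresponding digit of $2^m-b$, which says that $k$ and $b-1$ never have a $1$ in the same position, i.e.\ there is no carry in computing $k + (b-1)$ in base $2$.

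For part (b), I would substitute $b = a-1$ into the same formula. The constraint $j \le a - b - 1 = 0$ forces $j = 0$, so only the term $j=0,\ i=k$ survives, giving $f_k = \binom{2^m - a}{k}$ for $0 \le k \le 2^m - a$ and $f_k = 0$ otherwise. Plugging into \eqref{delta_as_linear_combination} (specialized to $q=2$) gives the stated identity. For the count of nonzero coefficients, I would apply Proposition~\ref{prop245}(c) (equivalently Proposition~\ref{faj_ne_0}), which says that the number of $k \in \{0,\dotsc,2^m-a\}$ with $\binom{2^m-a}{k} \not\equiv 0 \pmod 2$ equals $t_a$.

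There is no real obstacle here beyond bookkeeping: both statements reduce to one-term collapses of the double sum in Theorem~\ref{initial_values_q2}. The only small verification is the translation in (a) between the Lucas non-vanishing condition for $\binom{2^m-b}{k}$ and the no-carry condition on $k + (b-1)$, which is routine once one takes complements inside the length-$m$ window.
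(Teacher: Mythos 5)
Your proposal is correct and takes essentially the same approach as the paper: both parts come from specializing the $f_k$ formula of Theorem~\ref{initial_values_q2} (with $i=0$ forced when $a=2^m$, and $j=0$ forced when $b=a-1$), the no-carry criterion in (a) from Lucas' theorem applied to the digit-complement relation between $2^m-b$ and $b-1$, and the count in (b) from Proposition~\ref{prop245}(c). Your observation that the Lucas condition is actually an equivalence is a harmless slight strengthening of the paper's stated ``if'' direction.
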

\begin{proof}
a) If $a = 2^m$, then $i = 0$ and $\binom{2^m-a}{0} = 1$. Thus, $f_{k} =
\binom{2^m-b}{k}$ for $0 \le k \le 2^m-b-1$. Let $b-1 = b_0 + b_1 2 +  \dotsb +
b_{m-1}2^{m-1}$ and $k = k_0 + k_12 + \dotsb + k_{m-1}2^{m-1}$   be the base $p$
expansions of $b-1$ and $k$, respectively. Then the base 2 expansion
of $2^m-b$ is $\sum (1-b_l)p^l$. If $k \le 2^m-b-1$ and there is no
carry over base 2 in the sum of $k$ and $b-1$, then $f_{k} \ne 0$ because of
Lucas theorem.

b) If $b = a-1$, then $a-b-1 =0$ and, so, $j = 0$. Therefore,
$f_{k}=\binom{2^m-a}{k}$ for $0 \le k  \le 2^m-a$. In this special case, the
number of $f_{k} \ne 0$ is $t_a$ because of Proposition~\ref{prop245} (c).
\end{proof}

\begin{example}
We use   the corollary to prove  the following conjectures for $q=2$
\cite[Section 4.1.3]{Thakur_Multizeta08}:
\begin{align}
  \Delta_d(2^n,2^n-1) & =  S_d(2^n,2^n-1) \label{sec413a} \\
 \Delta_d(2^{n}+1,2^{n}) & =  \sum _{i=2}^{2^{n}+1} S_d(i,2^{n+1}+1-i).
\label{sec413b}
\end{align}
When $l<n-1$,
\begin{multline}
  \Delta_d(2^n-2^l, 2^n-2^l-1) = S_d(2^n-2^l,2^n-2^l-1) \\ + 
S_d(2^n-2^{l+1},2^n-1). \label{sec413d}
\end{multline}

In all cases, it is enough to prove for $d = 1$. Conjecture~\ref{sec413a} 
follows immediately either from  a) or b) of Corollary~\ref{cor2421}, taking $a
=
2^n$ and $b = a-1$. For Conjecture~\ref{sec413b},  take $a = 2^{n}+1$ and $b =
2^{n}$. Then  $m = n+1$. By Proposition~\ref{prop247} (b), we have
$\binom{2^{n}-1}{k} =1$ for $k=0,1,\dotsc,2^{n}-1$;  so,
\begin{align*}
  \Delta(2^{n}+1, 2^{n})& = \sum _{k=0}^{2^{n}-1} S_1(2^{n}+1-k)
 = \sum _{k=2}^{2^{n}+1} S_1(k).
\end{align*}
Finally, for Conjecture~\ref{sec413d}, let $a = 2^n-2^l$ and $b = a-1$. Then, $m
= n$, $p^m-a = 2^l$ and, thus,
\begin{align*}
  \Delta(a,b) & = \sum _{k =0}^{2^l} \binom{2^l}{k} S_1(a-k) \\
& = S_1(2^n-2^l) + S_1(2^n-2^l-2^l) + \sum _{k =1}^{2^l-1} \binom{2^l}{k}
S_1(a-k).
\end{align*}
The 2-adic valuation of $\binom{2^l}{k}$ is $\ell(k) + \ell(2^l-k) -\ell(2^l)$,
where $\ell(k)$ is the sum of the digits of $k$ base $q=2$. Since $k\ne 0$ and
$2^l-k\ne 0$, we have that $\ell(k) + \ell(2^l-k) -\ell(2^l) \ge 1 +1 -1=1$ and
 $\binom{2^l}{k}=0$ for $k = 1,\dotsc, 2^l-1$ and the result follows.

\end{example}

\begin{example}[Conjecture 2.8.1, \cite{Jalr10}] Now we prove that
\begin{align}
  \Delta_d(q^n,q^n-1) = -S_d(q^n),\label{Conjecture281}
\end{align}
for any $q$, which is a generalization of \eqref{sec413a}. 
In \cite{Jalr11b}, we have already proved \eqref{Conjecture281}. Here, we prove
it again, as a corollary of
Theorem~\ref{closed_formula_for_general_q}. By Theorem~\ref{shuffle_thm2}, it
is enough to prove the case $d=1$.  Let $a=q^n$, $b = a-1$, and $N =
1+(q-2)q^n$. Then, $p^m-a=0$, and the polynomial $f(t)$ of
Theorem~\ref{closed_formula_for_general_q} becomes
\begin{align*}
\sum _{\theta \in \F_q} (t+ \theta)^N 
  = -1- \sum _{0<l<\frac{N}{q-1}} \binom{N}{l(q-1)} t^{N-l(q-1)} \theta^l.
\end{align*}
Taking $a$ as $q^n-1$ in  Proposition~\ref{prop516}, it 
follows that $\binom{N}{l(q-1)}=0$ for
$0<l<{N}/{(q-1)}$. Also, it is easy to see that the result  follows  from
the comparison of the coefficients of $x^j$ in the identity $(1+x)^N =
(1+x)(1+x^{q^n})^{q-2}$, which is valid over $\F_q$ (The author thanks  the
referee for pointing this out). 
\end{example}

\begin{example}
  We continue with Example~\ref{ex2414}. Let us evaluate $S(19,20)$. Let $a
= 20$ and $b = 19$. Then $m = 5$ and $a-b-1 = 0$. Therefore, $f_{k} =
\binom{12}{k}$ for $k = 0,1, \dotsc, 12$. The $k$'s for which $f_{k}\ne 0$ are
$0,4,8$ and 12. Then $S(19,20)= S(20,19) = \set{(1,20), (1,16),
(1,12),(1,8)}$. The
size of $S(19,20)$ is $4 = t_{20}$.
\end{example}

\section{Symmetric closed formulas}

In this section we give a symmetric closed formula for the $f_i$ in
Theorem~\ref{shuffle_thm1}.

\begin{theorem}\label{symmetric_closed_formula_for_general_q}
  Let $q$ be arbitrary. Let $a,b\in \Z_+$; let $m$ be the
smallest integer such that $a +b  \le p^m$. 
Then
\begin{align*}
  \Delta(a,b) = \sum _{i=0}^{b-1} f_i S_1(b-i) + \sum _{j=0}^{a-1} g_j S_1(a-j),
\end{align*}
where $f(t) = H_{a,b}(t) = f_0 + f_{1}t +\dotsb + f_{b-1}t^{b-1}$,  $g(t) =
H_{b,a}(t) = g_0 +  g_1 t +\dotsb + g_{a-1}t^{a-1}$,  $H_{a,b}(t)$ is given
by

\begin{align*}
H_{a,b}(t) = \frac{1}{ t^a } \left(  -  (t^{q-1}-1)^{p^m-a} \sum _{\theta \in
\F_q^*} \left( (t+\theta)^{q-1}-1 \right)^a \bmod t^{a+b} \right),
\end{align*}
and $H_{b,a}$ is obtained interchanging $a$ and $b$. Equivalently,
\begin{align}
H_{a,b}(t) & = 
\sum _{j = 0}^{p^m-a}
\sum _{ k}
\binom{p^m-a}{j}
\binom{a}{k_1,\dotsc,k_{q-1}}
(-1)^{a+j+1}
t^{j(q-1)+ \sigma(k) -a}, \label{Hab1}
\end{align}
where the second sum runs over all $(q-1)$-tuples $k = (k_1,\dotsc,k_{q-1})$ of
non-negative integers such that $k_1 +\dotsb + k_{q-1} = a$ and $\sigma(k)$ is
`even', where $\sigma(k) := \sum _{j=1}^{q-1} j k_{j-1}$ and $j$ and
$\sigma(k)$ are subject to  $j(q-1) + \sigma(k) <a+b$. 
\end{theorem}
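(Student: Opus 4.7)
The plan is to follow the partial-fraction method used in the proofs of Theorems~\ref{initial_values_q2} and~\ref{closed_formula_for_general_q}: start from
\[
\Delta(a,b)=\sum_{\theta_1\ne\theta_2}\frac{1}{(t-\theta_1)^a(t-\theta_2)^b},
\]
isolate the singular part at each pole $\mu\in\F_q$, exploit the translation invariance $t\mapsto t+\mu$ to reduce to the pole at $0$, and finally identify the resulting polynomial coefficients with $H_{a,b}(t)$ and $H_{b,a}(t)$.

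First I would split the singular part at $\mu$ into the two contributions coming from $\theta_1=\mu$ and from $\theta_2=\mu$ (the remaining terms being regular at $\mu$); this gives
\[
\frac{Q_\mu(t)}{(t-\mu)^a}+\frac{P_\mu(t)}{(t-\mu)^b},
\]
where $P_\mu(t)=\sum_{\theta\ne\mu}(t-\theta)^{-a}$ and $Q_\mu(t)=\sum_{\theta\ne\mu}(t-\theta)^{-b}$ are regular at $\mu$ and satisfy $P_\mu(t)=P_0(t-\mu)$, $Q_\mu(t)=Q_0(t-\mu)$. Writing the Taylor expansions at $0$ as $P_0(t)=\sum_k p_k t^k$ and $Q_0(t)=\sum_k q_k t^k$ and summing the singular parts over $\mu\in\F_q$ yields
\[
\Delta(a,b)=\sum_{k=0}^{a-1}q_k\,S_1(a-k)+\sum_{k=0}^{b-1}p_k\,S_1(b-k),
\]
so the theorem reduces to the identifications $f(t)=P_0(t)\bmod t^b$ and $g(t)=Q_0(t)\bmod t^a$.

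To establish $f(t)=H_{a,b}(t)$ I would use $(t+\theta)^q-(t+\theta)=t^q-t=[1]$, whence $(t+\theta)^{q-1}-1=[1]/(t+\theta)$ for $\theta\ne0$; re-indexing $\theta\mapsto-\theta$ then gives $\sum_{\theta\in\F_q^*}((t+\theta)^{q-1}-1)^a=[1]^aP_0(t)$. Combining this with $[1]^a=t^a(t^{q-1}-1)^a$ and the Frobenius identity $(t^{q-1}-1)^{p^m}=t^{p^m(q-1)}-1$ in $\F_p$ gives
\[
-(t^{q-1}-1)^{p^m-a}\sum_{\theta\in\F_q^*}\bigl((t+\theta)^{q-1}-1\bigr)^a=t^aP_0(t)-t^{a+p^m(q-1)}P_0(t).
\]
Since $p^m(q-1)\ge p^m\ge a+b$, the second term vanishes modulo $t^{a+b}$, so dividing the bracket by $t^a$ gives $H_{a,b}(t)=P_0(t)\bmod t^b=f(t)$. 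Swapping $a$ and $b$ yields $H_{b,a}(t)=g(t)$ by the same argument.

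For the equivalent expansion \eqref{Hab1} I would expand $(t+\theta)^{q-1}-1=\sum_{j=1}^{q-1}(-1)^jt^j\theta^{q-1-j}$ (using $\binom{q-1}{j}=(-1)^j$ from Proposition~\ref{prop245}(a)), raise to the $a$-th power by the multinomial theorem, sum over $\theta\in\F_q^*$ using $\sum_{\xi\in\F_q^*}\xi^\ell=-1$ or $0$ according as $(q-1)\mid\ell$ (which forces $\sigma(k)$ to be `even'), and expand $(t^{q-1}-1)^{p^m-a}$ by the binomial theorem; the sign $(-1)^{a+j+1}$ in \eqref{Hab1} then comes from $(-1)^{p^m}=-1$ in $\F_p$ combined with $(-1)^{\sigma(k)}=1$ (valid whenever $\sigma(k)$ is `even'), and the constraint $j(q-1)+\sigma(k)<a+b$ comes from the reduction modulo $t^{a+b}$. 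The main obstacle is purely the book-keeping in this last step, paralleling the corresponding computation in the proof of Theorem~\ref{closed_formula_for_general_q}; the only genuine novelty is enlarging the modulus from $t^a$ to $t^{a+b}$, which lets the two partial-fraction contributions (of pole orders $a$ and $b$) appear on an equal footing.
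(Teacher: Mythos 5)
Your proposal is correct and follows essentially the same partial-fraction route as the paper: your pole-by-pole splitting of the singular part at each $\mu$ into the $\theta_1=\mu$ and $\theta_2=\mu$ contributions (the cross terms being regular at $\mu$), combined with translation invariance to reduce to the pole at $0$, is exactly the paper's decomposition $P(t)=P_a(t)+P_b(t)+P_{a,b}(t)$ of the numerator $P(t)= -\frac{[1]^{p^m-a-b}}{t^{p^m-a-b}}[1]^{a+b}\Delta(a,b)\bmod t^{a+b}$ with $v_t(P_{a,b})\ge a+b$. Your explicit Frobenius step $-\left(t^{q-1}-1\right)^{p^m-a}[1]^a\equiv t^a \bmod t^{a+b}$, which identifies $f(t)=P_0(t)\bmod t^b$ with $H_{a,b}(t)$, merely unwinds the paper's definition $H_{a,b}(t)=\frac{1}{t^a}\left(P_a(t)\bmod t^{a+b}\right)$, and your multinomial/character-sum derivation of \eqref{Hab1}, including the sign bookkeeping via $(-1)^{p^m}=-1$ and $(-1)^{\sigma(k)}=1$ for `even' $\sigma(k)$, coincides with the paper's computation of $P_a(t)$.
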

\begin{proof}
Following the same steps as in the proof of Theorem~\ref{initial_values_q2}, it
is easy to see that
$\Delta(a,b) =  \sum _{i=0}^{a+b-1} p_i S_1(a+b-i)$,
where  $P(t) = p_0 + p_1t + \dotsb + p_{a+b-1}t^{a+b-1} \in \F_p[t]$ is given by
$$P(t) =  -\frac{[1]^{p^m-a-b}}{t^{p^m-a-b}} [1]^{a+b}
\Delta(a,b) \bmod t^{a+b}.$$ 
Let us write $P(t) = P_a(t) + P_b(t) + P_{a,b}(t)$, 
where $P_{a}(t) = -\frac{[1]^{p^m-a}}{t^{p^m-a}} \sum _{\theta\in \F_q^*}
\frac{[1]^a}{(t+\theta)^a}$ and $P_{a,b}(t)=
-\frac{[1]^{p^m-a-b}}{t^{p^m-a-b}} \sum _{ \theta, \mu \in \F_q^*, \theta\ne
\mu} \frac{[1]^{a}}{(t+\theta)^a}\frac{[1]^b}{(t+\mu)^b}$.
Since  $v_t(P_{a,b}(t)) \ge a+b$, where $v_t(\cdot)$ is the
valuation at $t$, we have 
$P(t) \equiv P_a(t)+ P_b(t)$ modulo $t^{a+b}$. Note that $H_{a,b}(t) =
(1/t^a)(P_a(t)\bmod t^{a+b})$ (observe
that $v_t(P_a(t))\ge a$) and that $H_{b, a}(t)$ is obtained
by interchanging $a$ and $b$.  Suppose for example that 
$\min\{a,b\}=a$. Then $P_{a}(t)+P_{b}(t)$ modulo
$t^{a+b}$ equals 
\begin{align*}
t^a f(t) + t^b g(t)  =
\sum _{i=a}^{b-1}f_{i-a} t^{i} + 
\sum _{i=b}^{a+b-1}(f_{i-a}+g_{i-b})t^{i}.
\end{align*}
Therefore, 
\begin{align*}
\Delta(a,b)  = 
\sum _{i=0}^{b-1} f_{i} S_1(b-i) +
\sum _{i=0}^{a-1} g_{i}S_1(a-i).
\end{align*}
Now we compute $P_{a}(t)$. The factor $-[1]^{p^m-a}/t^{p^m-a} =
-(t^{q-1}-1)^{p^m-a}$  brings in the binomial coefficient. 
Let $\theta \in \F_q^*$. Then, raising $[1]/(t+\theta) = \sum _{i=1}^{q-1}(-t)^i
\theta^{q-1-i}$ to the $a$-th power, by the multinomial theorem, brings in
$\sigma(k)$ and the multinomial coefficient. Summing over $\theta\in
\F_q^*$ and using the fact that 
$\sum_{\xi\in\F_q^*}\xi^\ell$ is $-1$ or $0$, depending on  $\ell$ being `even'
or not, we get 
\begin{align*}
P_{a}(t) = \sum _{j=0}^{p^m-a} \sum _{ \substack { k_1 +\dotsb + k_{q-1}=a \\ 
q-1\mid\sigma(k)}}
\binom{p^m-a}{j} \binom{a}{k_1,\dotsc,k_{q-1}}
(-1)^{a+j+1} t^{j(q-1)+\sigma(k) }.
\end{align*}
Since  $H_{a,b}(t)$ is $1/t^a$ times the remainder of $P_{a}(t)$ when divided
by $t^{a+b}$, we see that $H_{a,b}(t)$ is exactly as claimed.
\end{proof}

When $q=2$,  we get the following compact formula.

\begin{corollary}\label{symmetric_closed_formula_q2}
  Let $q = 2$. Let $a,b \in \Z_+$ and let $m$ be the smallest integer such that
$a+b \le 2^m$. Then
\begin{align*}
  \Delta(a,b) = \sum _{j=0}^{b-1}
\binom{2^m-a}{j}  S_1(b-j) + \sum _{i=0}^{a-1} \binom{2^m-b}{i} S_1(a-i).
\end{align*}
\end{corollary}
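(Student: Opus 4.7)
The plan is to obtain the corollary as a direct specialization of Theorem~\ref{symmetric_closed_formula_for_general_q} to $q=2$. What needs to be verified is simply that all the combinatorial data in formula~\eqref{Hab1} collapses to a single binomial coefficient when $q=2$, and that the sign disappears in characteristic $2$.

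First I would specialize the indexing set for $k$. For $q=2$ the tuple $k=(k_1,\dots,k_{q-1})$ is a $1$-tuple $(k_1)$ with $k_1=a$, so the second sum in~\eqref{Hab1} collapses to a single term. The multinomial coefficient $\binom{a}{k_1}$ equals $\binom{a}{a}=1$, and using the definition $\sigma(k)=\sum_{j=1}^{q-1} j k_{j-1}$ we get $\sigma(k)=1\cdot k_0 = a$. The `evenness' of $\sigma(k)$ modulo $q-1=1$ is automatic. The range restriction $j(q-1)+\sigma(k)<a+b$ becomes $j+a<a+b$, i.e.\ $j<b$; and the sign $(-1)^{a+j+1}$ equals $1$ in $\F_2$. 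Substituting all of this into~\eqref{Hab1} yields
\begin{align*}
H_{a,b}(t) = \sum_{j=0}^{b-1} \binom{2^m-a}{j}\, t^{j},
\end{align*}
where we may extend the upper limit from $\min(2^m-a, b-1)$ to $b-1$ because $\binom{2^m-a}{j}=0$ for $j>2^m-a$.

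Interchanging the roles of $a$ and $b$ gives
\begin{align*}
H_{b,a}(t) = \sum_{i=0}^{a-1} \binom{2^m-b}{i}\, t^{i}.
\end{align*}
Reading off the coefficients and inserting them into the conclusion of Theorem~\ref{symmetric_closed_formula_for_general_q}, which expresses $\Delta(a,b)$ as $\sum_{j=0}^{b-1} f_j S_1(b-j) + \sum_{i=0}^{a-1} g_i S_1(a-i)$ with $f(t)=H_{a,b}(t)$ and $g(t)=H_{b,a}(t)$, gives exactly the asserted formula. There is essentially no obstacle here; the only thing to watch is the bookkeeping of the upper limits of summation and the observation that $(-1)^{a+j+1}\equiv 1\pmod 2$, both of which are routine.
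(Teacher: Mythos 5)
Your proposal is correct and takes exactly the route of the paper, whose entire proof is the one-line observation that for $q=2$ equation~\eqref{Hab1} collapses to $H_{a,b}(t)=\sum_{j=0}^{b-1}\binom{2^m-a}{j}t^j$; you have simply spelled out the specialization (single tuple $k_1=a$, $\sigma(k)=a$, trivial `even' condition, $j<b$ from the range restriction, sign trivial in $\F_2$, and extension of the upper limit via vanishing binomials) that the paper leaves implicit.
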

\begin{proof}
  For $q=2$, the equation~\eqref{Hab1} becomes $H_{a,b}(t) = \sum
_{j=0}^{b-1}\binom{p^m-a}{j}t^j$.
\end{proof}

{\bf Acknowledgments.}
This work has been developed under
the direction of Dr. Dinesh S. Thakur at the University of Arizona and Dr.
Gabriel D. Villa Salvador at the Departamento de Control Autom\'atico del
Centro de Investigaci\'on y de Estudios Avanzados del IPN (Cinvestav-IPN) in
Mexico City. I thank Javier Diaz-Vargas for his suggestions and advice. I
want
to express my gratitude to the Universidad Aut\'onoma de Yucat\'an and the
Consejo Nacional de Ciencia y Tecnolog\'ia for their financial support. I thank
 the   Sage project~\cite{Sage444} contributors  for providing the
environment to develop this research. I thank the referee for several
helpful suggestions.


\newcommand{\etalchar}[1]{$^{#1}$}

\end{document}